\numberwithin{equation}{section}
\renewcommand{\marginpar}[2][]{}
\newcommand{\ORD}{\mathop{{\rm ORD}}}
\renewcommand{\P}{{\mathbb P}}
\newcommand{\E}{{\mathbb E}}
\newcommand{\Q}{{\mathbb Q}}
\newcommand{\Sacks}{\mathop{\rm Sacks}}
\newcommand{\Add}{\mathop{\rm Add}}
\renewcommand{\Col}{\mathop{\rm Col}}
\newcommand{\Ult}{\mathop{\rm Ult}}
\newcommand{\forces}{\Vdash}
\newcommand{\restrict}{\upharpoonright}
\renewcommand{\>}{\rangle}
\newcommand{\elemsub}{\prec}
\newcommand{\st}{\mid}
\newcommand{\supp}{\mathop{\rm supp}}
\newcommand{\dom}{\mathop{\rm dom}}
\newcommand{\ran}{\mathop{\rm ran}}
\newcommand{\ro}{\mathop{\rm ro}}
\newcommand{\crit}{\mathop{\rm crit}}
\renewcommand{\span}{\mathop{\rm span}}
\renewcommand{\and}{\mathop{\&}}
\newtheorem{theorem}{Theorem}
\newtheorem{lemma}{Lemma}
\newtheorem{corollary}[theorem]{Corollary}
\newtheorem{claim}{Claim}
\theoremstyle{definition}
\newtheorem{question}{Question}
\newtheorem{remark}{Remark}
\newtheorem{definition}{Definition}
\subjclass[2000]{03E35, 03E55}
\date{\today}
\begin{document}

\title{Indestructibility of generically strong cardinals}

\author[Brent Cody]{Brent Cody}
\address[Brent Cody]{ 
Virginia Commonwealth University,
Department of Mathematics and Applied Mathematics,
1015 Floyd Avenue, PO Box 842014, Richmond, Virginia 23284
} 
\email[B. ~Cody]{bmcody@vcu.edu} 
\urladdr{http://www.people.vcu.edu/~bmcody/}

\author[Sean Cox]{Sean Cox}
\address[Sean Cox]{
Virginia Commonwealth University,
Department of Mathematics and Applied Mathematics,
1015 Floyd Avenue, PO Box 842014, Richmond, Virginia 23284
} 
\email[S. ~Cox]{scox9@vcu.edu} 
\urladdr{http://skolemhull.wordpress.com/}

\begin{abstract}
Foreman~\cite{Foreman:CalculatingQuotientAlgebras} proved the \emph{Duality Theorem}, which gives an algebraic characterization of certain ideal quotients in generic extensions.  As an application he proved that generic supercompactness of $\omega_1$ is preserved by any proper forcing.  We generalize portions of Foreman's Duality Theorem to the context of generic extender embeddings and \emph{ideal extenders} (as introduced by Claverie~\cite{Claverie:IdealsIdealExtendersAndForcingAxioms}).  As an application we prove that if $\omega_1$ is generically strong, then it remains so after adding any number of Cohen subsets of $\omega_1$; however many other $\omega_1$-closed posets---such as $\text{Col}(\omega_1, \omega_2)$---can destroy the generic strongness of $\omega_1$.  This generalizes some results of Gitik-Shelah~\cite{GitikShelah:OnCertainIndestructibility} about indestructibility of strong cardinals to the generically strong context.  We also prove similar theorems for successor cardinals larger than $\omega_1$.

\end{abstract}

\subjclass[2010]{Primary 03E35; Secondary 03E55}

\keywords{generic large cardinals, strong cardinal, precipitous, indestructibility}

\maketitle

\section{Introduction}\label{sectionintroduction}

Many conventional large cardinal properties are witnessed by the existence of ultrafilters or elementary embeddings. Some of these properties can be viewed as special cases of more general concepts, namely generic large cardinal properties \cite[Remark 3.14]{Foreman:Handbook}, which are usually witnessed by \emph{generic} ultrafilters and \emph{generic} elementary embeddings, and which can be arranged to hold at small cardinals. For example, by collapsing below a measurable cardinal, one can arrange that there is a precipitous ideal $I$ on $\omega_1$ witnessing the generic measurability of $\omega_1$ in the sense that forcing with $\wp(\kappa)/I-\{[0]\}$ naturally yields a $V$-normal $V$-ultrafilter on $\wp(\kappa)^V$ with a well-founded ultrapower in the extension (see \cite[Section 17.1]{Cummings:Handbook}).



Preserving large cardinals through forcing by lifting elementary embeddings has been a major theme in set theory \cite{Cummings:Handbook}. In some cases, large cardinals can be made indestructible by wide classes of forcing. For example, given a supercompact cardinal $\kappa$, Laver \cite{Laver:MakingSupercompactnessIndestructible} used a preparatory forcing iteration to produce a model in which $\kappa$ remains supercompact, and the supercompactness of $\kappa$ is indestructible by any further $\kappa$-directed closed forcing. Gitik and Shelah \cite{GitikShelah:OnCertainIndestructibility} proved that the strongness of a cardinal $\kappa$ can be made indestructible by $\kappa^+$-weakly closed forcing satisfying the Prikry condition. As shown by Hamkins \cite{Hamkins:TheLotteryPreparation}, one can obtain indestructibility results for many large cardinals by using a preparatory forcing he calls the lottery preparation. 


It is natural to wonder whether similar results hold for some generic large cardinal properties.  Let us first clear up some possibly confusing terminology in the literature, which will also help the reader more easily understand the division of Sections \ref{sec_Surgery} and \ref{sec_Duality} in this paper and Claverie's~\cite{Claverie:IdealsIdealExtendersAndForcingAxioms} distinction between \emph{ideally strong} and \emph{generically strong}.  Conventional large cardinals---such as measurable and supercompact cardinals---admit equivalent definitions in terms of either embeddings or ultrafilters.  This is not true of generic large cardinals.  Let us say that a cardinal $\kappa$ is:
\begin{itemize}
 \item \emph{generically measurable} iff there is a poset which forces that there is an elementary embedding $j:V \to M$ with $M$ wellfounded and $\text{crit}(j) = \kappa$;
 \item \emph{ideally measurable} iff there is a precipitous ideal on $\kappa$; equivalently, $\kappa$ is generically measurable and the poset which witnesses the generic measurability is of the form $\wp(\kappa)/I$ for some ideal $I$ on $\kappa$.
  \item \emph{generically supercompact} iff for every $\lambda > \kappa$ there is a poset which forces that there is an elementary embedding $j:V \to M$ with $M$ wellfounded, $\text{crit}(j) = \kappa$, and $j " \lambda \in M$.
  \item \emph{ideally supercompact} iff $\kappa$ is generically supercompact and for each $\lambda$ the witnessing poset is of the form $\wp(P_\kappa(\lambda))/I$ for some  some normal precipitous ideal $I$ on $P_\kappa(\lambda)$.  This is what Foreman~\cite{Foreman:CalculatingQuotientAlgebras} calls generic supercompactness.
\end{itemize}

Kakuda \cite{Kakuda:OnACondtionForCohenExtensionsWhichPreservePrecipitousIdeals} and Magidor \cite{Magidor:PrecipitousIdealsAndSigma14Sets} independently proved that if $\kappa$ is a regular uncountable cardinal and $\P$ is a poset satisfying the $\kappa$-chain condition, then $I\subseteq \wp(\kappa)$ is a precipitous ideal if and only if $\P$ forces that the ideal on $\kappa$ generated by $I$ is precipitous. In particular, if $\kappa$ is ideally measurable in the sense given above, then it remains so in any $\kappa$-c.c. forcing extension.  Their proof also shows that generic measurability of $\kappa$ is preserved by any $\kappa$-cc forcing; this is an easier result.   

Foreman~\cite{Foreman:CalculatingQuotientAlgebras} proved a \emph{Duality Theorem} which gives an algebraic characterization of ideal quotients arising in many settings where generic large cardinal embeddings are lifted to generic extensions.  As an application, he proved the following theorem:
\begin{theorem}[Foreman~\cite{Foreman:CalculatingQuotientAlgebras}]\label{thm_Foreman}
If $\omega_1$ is generically supercompact or ideally supercompact, then it remains so after any proper forcing. 
\end{theorem}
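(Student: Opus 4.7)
I will lift a generic supercompactness embedding of $V$ to $V[G]$. Fix proper $\mathbb{P}\in V$, a $V$-generic $G\subseteq\mathbb{P}$, and a target $\lambda\in V[G]$; without loss of generality $\lambda\in V$ and $\lambda\ge|\mathbb{P}|^V$. By generic supercompactness of $\omega_1$ in $V$, let $\mathbb{Q}\in V$ be such that any $V$-generic $K\subseteq\mathbb{Q}$ produces $j:V\to M$ in $V[K]$ with $\crit(j)=\omega_1$ and $j" \lambda\in M$. Force $\mathbb{Q}$ over $V[G]$ to obtain $K$; since $\mathbb{Q}\in V$, the Product Lemma gives $V[G][K]=V[K][G]$ with $G$ being $V[K]$-generic for $\mathbb{P}$, and the generic supercompact embedding $j:V\to M$ exists in $V[K]\subseteq V[G][K]$.

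\textbf{Key structure and lift.} The crucial structural fact is that $|j" \lambda|^M<j(\omega_1)=\omega_1^M$, so in $M$ every $V$-set of size $\leq\lambda$ is countable; in particular $j" \mathbb{P}$ and $j" G$ are countable in $M$, and $N:=j" H_{\lambda}^V$ is a countable-in-$M$ elementary submodel of $H_{j(\lambda)}^M$ containing $j(\mathbb{P})$ and $j" G$. Moreover $j" G$ is $(N,j(\mathbb{P}))$-generic in $M$: any dense subset of $j(\mathbb{P})$ lying in $N$ has the form $j(D)$ for some $D\in V$ dense in $\mathbb{P}$, and $V$-genericity of $G$ gives $p\in G\cap D$ with $j(p)\in j" G\cap j(D)$. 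I next seek an $M$-generic $G^{+}\subseteq j(\mathbb{P})$ with $j" G\subseteq G^{+}$; given such $G^{+}$, setting $j^{+}(\dot\tau^G):=j(\dot\tau)^{G^{+}}$ produces an elementary $j^{+}:V[G]\to M[G^{+}]$ with $\crit(j^{+})=\omega_1$ and $j^{+}" \lambda=j" \lambda\in M\subseteq M[G^{+}]$, so $\mathbb{Q}*\dot{\mathbb{R}}$ (with $\dot{\mathbb{R}}$ the $\mathbb{Q}$-name for the $G^{+}$-producing forcing over $M[K]$) witnesses generic supercompactness of $\omega_1$ at $\lambda$ in $V[G]$. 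For the ideally supercompact case I would identify the induced ideal $\hat I$ in $V[G]$ and apply the Duality Theorem to identify $\wp(\mathcal{P}_{\omega_1}(\lambda))^{V[G]}/\hat I$ with $\mathbb{Q}*j(\mathbb{P})/j" \dot G$, reducing precipitousness and supercompactness-witnessing of $\hat I$ to non-triviality of that quotient.

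\textbf{Main obstacle.} The hard part is producing $G^{+}$, equivalently showing that $j(\mathbb{P})/j" G$ is non-trivial in $M[K]$. There is no single ``master condition'' below $j" G$ in general---for $\mathbb{P}=\Add(\omega,\omega_1)$ the filter $j" G$ admits no common lower bound in $j(\mathbb{P})$---so $G^{+}$ must be built recursively. This is where properness of $\mathbb{P}$ is essential: by elementarity $j(\mathbb{P})$ is proper in $M$, and this together with the countability-in-$M$ of $j" G$ and its $(N,j(\mathbb{P}))$-genericity permits a countable recursion inside $V[G][K]$ extending $j" G$ through each dense subset of $j(\mathbb{P})$ that lies in $M$. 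In the Duality-Theorem packaging, properness of $\mathbb{P}$ in $V$ (not merely of $j(\mathbb{P})$ in $M$) guarantees that this extension succeeds uniformly across $V$-generics $G$, so that $j(\mathbb{P})/j" \dot G$ is a well-defined $\mathbb{Q}$-name for a non-trivial forcing.
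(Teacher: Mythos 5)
The paper does not give its own proof of this statement---it is quoted from Foreman, and the machinery the paper develops in Section~\ref{sec_Duality} is the extender-analogue of what Foreman uses---so your attempt has to be judged on its own terms. Your setup is correct and matches the standard opening moves: reversing the order of the forcings via the Product Lemma, observing that $N:=j"H^V_\theta$ is an element of $M$ that is countable in $M$ (this uses $M\models|j"\lambda|<j(\omega_1)$, which you should either build into the choice of $\mathbb{Q}$ or justify, since Definition of generic supercompactness in the introduction only demands $j"\lambda\in M$), noting that $j"G$ generates an $(N,j(\mathbb{P}))$-generic filter, and correctly recognizing that no single master condition lies below $j"G$. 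The gap is in your resolution of the ``main obstacle.'' You claim that properness of $j(\mathbb{P})$ in $M$ ``permits a countable recursion inside $V[G][K]$ extending $j"G$ through each dense subset of $j(\mathbb{P})$ that lies in $M$.'' This cannot work: $M$ is not countable in $V[G][K]$, so it has uncountably many dense subsets of $j(\mathbb{P})$ there, and no recursion carried out inside $V[G][K]$ can produce a fully $(M,j(\mathbb{P}))$-generic filter; moreover, properness provides no way to find lower bounds for the descending sequence such a recursion would require. The filter $G^+$ must be obtained by genuinely forcing further over $V[G][K]$, and the real content of the theorem is that the relevant quotient is nontrivial and that its generic contains $j"G$.

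The correct mechanism is the one abstracted in Theorem~\ref{thm_Duality_Extenders}(\ref{item_AbstractFact}). Working in $M$, properness of $j(\mathbb{P})$ yields a condition $q$ that is $(N,j(\mathbb{P}))$-generic; the point of such a $q$ is that $q\Vdash^M_{j(\mathbb{P})}$ ``$\dot G^+\cap N$ is $(N,j(\mathbb{P}))$-generic,'' hence ``$j^{-1}(\dot G^+)$ is $(V,\mathbb{P})$-generic.'' Consequently $e(p):=\llbracket\,\check p\in j^{-1}(\dot G^+)\,\rrbracket$ is a complete embedding of $\mathbb{P}$ into $\ro\bigl(\mathbb{Q}*(j(\mathbb{P})\restrict q)\bigr)$, and the witnessing poset over $V[G]$ is the quotient of this two-step iteration by $e[G]$. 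Nontriviality of that quotient is exactly completeness of $e$, and its generic produces $K$ together with an $(M,j(\mathbb{P}))$-generic $G^+$ satisfying $j^{-1}(G^+)=G$, i.e.\ $j"G\subseteq G^+$, whence the lift exists. Your closing sentence asserts precisely this nontriviality (``$j(\mathbb{P})/j"\dot G$ is a well-defined $\mathbb{Q}$-name for a non-trivial forcing'') but supplies no argument for it; the master condition $q$ and the completeness of $e$ are the missing ingredients. For the ideally supercompact case one then applies the Duality Theorem to identify this quotient with $\wp(P_{\omega_1}(\lambda))/\hat I$ for the induced normal ideal $\hat I$ and to transfer precipitousness, which is the part of Foreman's argument that the present paper generalizes to ideal extenders.
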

It should be noted that Foreman's Theorem \ref{thm_Foreman}, and also our Theorem \ref{maintheorem} below, require no ``preparation forcing".  We caution the reader that what Foreman calls generic supercompactness is what we're calling ideal supercompactness.  We note that Foreman's Duality Theorem is used in his proof of preservation of ideal supercompactness; but the Duality Theorem is \emph{not} used in the proof of preservation of generic supercompactness.

Claverie \cite{Claverie:IdealsIdealExtendersAndForcingAxioms} introduced the notions of generically strong and  ideally strong cardinals.  These are defined in Section \ref{sec_Preliminaries} below, but roughly:  $\kappa$ is generically strong iff there are arbitrarily strong wellfounded generic embeddings of $V$; and $\kappa$ is ideally strong iff its generic strongness is witnessed by what Claverie calls \emph{ideal extenders}.  In summary, just as precipitous ideals can be used to witness the generic measurability or supercompactness of a cardinal, precipitous ideal extenders can be used to witness generic strongness.

In this article we prove the following indestructibility result concerning ideally strong cardinals and generically strong cardinals:
\begin{theorem}\label{maintheorem}
If $\omega_1$ is ideally strong or generically strong, then it remains so after forcing to add any number of Cohen subsets of $\omega_1$.
\end{theorem}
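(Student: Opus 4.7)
Let $\P = \Add(\omega_1,\theta)^V$ and let $G\subseteq \P$ be $V$-generic. The two cases will be treated by parallel but distinct techniques developed in Sections~\ref{sec_Surgery} and~\ref{sec_Duality}. For the generic strongness case, fix a target ordinal $\lambda$; the goal is to exhibit, in some further generic extension of $V[G]$, a wellfounded embedding $j^{*}\colon V[G]\to N$ with $\crit(j^{*})=\omega_1$, $j^{*}(\omega_1)>\lambda$, and $V^{V[G]}_\lambda\subseteq N$. Using generic strongness of $\omega_1$ in $V$, obtain a poset $\mathbb{R}\in V$ that forces an embedding $j\colon V\to M$ with $\crit(j)=\omega_1$, $V^V_{\lambda'}\subseteq M$, and $j(\omega_1)>\lambda'$, where $\lambda'$ is chosen large enough to absorb $|\P|$ and $\lambda$. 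Force with $\P\times \mathbb{R}$ over $V$; by product commutativity this yields $V[G][H]$ independent of order, and $j\colon V\to M$ lives in $V[H]\subseteq V[G][H]$.

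To lift $j$ to $j^{*}\colon V[G]\to M[G^{*}]$ one needs a filter $G^{*}$ on $j(\P)=\Add(j(\omega_1),j(\theta))^M$ which is $M$-generic and contains $j[G]$. Here the surgery technique of Gitik--Shelah enters: in $V[G][H]$, build an $M$-generic $G^{**}$ on $j(\P)$ by diagonalizing the set-many dense subsets of $j(\P)$ lying in $M$, using $\omega_1$-closure of $j(\P)$ in $M$; then define $G^{*}$ by overwriting, on each coordinate $(\alpha,j(\beta))$ with $\beta<\theta$, the bit of $G^{**}$ with the corresponding bit of $G$, so that $j[G]\subseteq G^{*}$; and verify that the surgery preserves $M$-genericity. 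The lift $j^{*}(\dot\sigma^G)=j(\dot\sigma)^{G^{*}}$ then witnesses the desired generic strength of $\omega_1$ in $V[G]$: one has $V^{V[G]}_\lambda\subseteq M[G^{*}]$ because $G$ is recoverable inside $M[G^{*}]$ from $j[G]\subseteq G^{*}$ together with $j\restrict V^V_{\lambda'}\in M$.

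For the ideal strongness case, start from a precipitous ideal extender $\vec I\in V$ witnessing sufficient strength, and apply the generalization of Foreman's Duality Theorem to ideal extenders established in Section~\ref{sec_Duality}. That result should identify the natural $V[G]$-ideal extender generated by $\vec I$ with an algebra whose generic ultrapower of $V[G]$ is wellfounded of the required strength; in essence, the duality formula will present the quotient by $\vec I^{V[G]}$ as $\P$ followed by a surgery-type poset analogous to the one above. Precipitousness of $\vec I^{V[G]}$, hence ideal strongness of $\omega_1$ in $V[G]$, then follows.

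The principal obstacle is the surgery verification: showing that the modified filter $G^{*}$ remains $M$-generic despite being altered on a cofinal-in-$j(\theta)$ collection of coordinates. The argument must exploit $\omega_1$-closure of $j(\P)$ together with the fact that below any condition meeting a prescribed dense $D\in M$ one can refine without touching coordinates in $j[\theta]$, since $j[\theta]$ is ``thin'' relative to $j(\theta)$ from $M$'s standpoint. In the ideal-extender case the analogous work is absorbed into verifying that the Duality Theorem of Section~\ref{sec_Duality} applies to the $\omega_1$-closed poset $\P$---an instance of the same surgery analysis in a different guise.
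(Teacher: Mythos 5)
Your overall architecture (surgery to lift through $\Add(\omega_1,\theta)$, then a duality theorem for ideal extenders to extract a precipitous ideal extender in the extension) matches the paper's, but there is a genuine gap in the surgery step, and it is the central difficulty of the whole argument. You perform surgery on the \emph{actual} generic $G$: you build an $M$-generic $G^{**}$ on $j(\P)$ in $V[G][H]$ and then overwrite it using $Q=\bigcup j[G]$. To verify that the modified filter $G^{*}$ still meets a maximal antichain $A\in M$, one has to transform $A$ inside $M$ by an automorphism of $j(\P)$ that flips exactly the bits changed by the surgery on the coordinates $d\cap(j[\theta]\times\omega_1)$, where $d$ is the union of the domains of the conditions in $A$; this requires the correction data $Q\restrict j[W]$ (for a suitable $W\in([\theta\times\omega_1]^{\le\omega_1})^V$ with $d\cap\ran(j)\subseteq j[W]$) to be an \emph{element of $M$}. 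In the conventional Woodin/Gitik--Shelah setting this follows from $M^{\omega_1}\cap V[G]\subseteq M$, but here $M$ is only a generic ultrapower and is at best closed under finite sequences; moreover $G$ is mutually generic with $H$, so $G\notin V[H]\supseteq M$ and $Q\restrict j[W]\notin M$ in general. Your proposed fix --- meeting dense sets while staying off $j[\theta]$ because it is ``thin'' --- does not work: maximal antichains of $M$ can concentrate entirely on coordinates in $j[\theta]\times\omega_1$ (these are just ordinary ordinals below $j(\theta)$ from $M$'s point of view), so there is no way to avoid committing bits there.

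The paper's resolution is structurally different: it does \emph{not} lift onto the actual generic. Instead it uses the hypothesis that $M\models H^V_\eta\in\text{IA}_\omega$ (i.e.\ $M$ sees $H^V_\eta$ as countable) to build, \emph{inside $M$}, a filter $g$ that is genuinely $(V,\Add(\omega_1,\theta))$-generic and contains any prescribed condition $p_0$; since $g\in M$, the correction function $Q\restrict j[W]$ is computable in $M$ from $g$ and $j\restrict W$, and the automorphism argument goes through. The statement for an arbitrary generic $g'$ is then recovered by a density argument: if some $p_0\in g'$ forced the failure of generic or ideal strongness in $V[\dot g]$, the internally built $g\ni p_0$ would contradict the forcing theorem. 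This two-step structure (lift onto a generic constructed inside $M$, then quote density) is absent from your proposal and cannot be dispensed with. A secondary imprecision: in the ideal case the witnessing object in $V[g]$ is not ``the ideal extender generated by $\vec I$'' but the ideal extender \emph{derived from the name for the lifted embedding}; the duality theorem identifies its associated forcing with a dense subset of $\ro(\E_{F}/e[g])$, and its hypothesis is precisely the conclusion of the surgery theorem, so the gap above infects this case as well.
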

\noindent Let us remark that Theorem \ref{maintheorem} is generalized below (see Theorem \ref{maintheorem_generalized} on page \pageref{maintheorem_generalized}) to include the case in which $\kappa>\omega_1$, assuming other technical requirements related to internal approachability.


Theorem \ref{maintheorem} will follow from Theorem \ref{theoremsurgery} and Theorem \ref{thm_Duality_Extenders} below.  Theorem \ref{theoremsurgery}---which applies to any generically strong setting---involves lifting a generic strongness embedding through Cohen forcing using a `surgery' argument (see \cite[Theorem 25.1]{Cummings:Handbook} and \cite{CodyMagidor}) rather than a master condition argument which is possible in the supercompactness context.  Theorem \ref{thm_Duality_Extenders}---which is specific to the ideally strong setting---extends portions of Foreman's duality theory (see \cite{Foreman:CalculatingQuotientAlgebras}
and \cite{Foreman:Handbook}) into the context of ideal extenders, and allows us to conclude that a certain $(\kappa,\lambda)$-ideal extender derived from a lifted embedding is precipitous.  Theorems \ref{theoremsurgery} and \ref{thm_Duality_Extenders} are then used to prove Theorem \ref{maintheorem} in Section \ref{sectionproof}.

It is natural to ask whether Theorem \ref{maintheorem} can be strengthened; for example, whether the generic or ideal strongness of $\omega_1$ is necessarily preserved by \emph{all} $\omega_1$-closed forcings (not just forcings to add Cohen subsets to $\omega_1$).  The answer is no, for the following reason, as alluded to in Gitik-Shelah~\cite{GitikShelah:OnCertainIndestructibility} in the conventional large cardinal setting.  Suppose $G$ is $\text{Col}(\kappa, \kappa^+)$-generic over $V$, and that in $V[G]$, the cardinal $\kappa$ is generically measurable.  Then $V$ must have an inner model with a Woodin cardinal.  Otherwise, the core model $K$ exists in $V$ and is absolute to $V[G]$; and moreover (see \cite{MR2963017}) $V[G]$ believes that $K$ computes $\kappa^+$ correctly; but this is a contradiction, because $\kappa^{+K} \le \kappa^{+V} < \kappa^{+V[G]}$.

\section{Preliminaries}\label{sec_Preliminaries}

\subsection{Ideal Extenders}

We refer the reader to Kanamori~\cite{Kanamori:Book} for the definition of an extender.

Claverie~\cite{Claverie:IdealsIdealExtendersAndForcingAxioms} gives a detailed account of \emph{ideal extenders} and when ideal extenders are precipitous.  We recount his main definitions in this section. Claverie defines a set $F$ to be a \emph{$(\kappa,\lambda)$-system of filters} if $F\subseteq\{(a,x)\in[\lambda]^{<\omega}\times \wp([\kappa]^{<\omega}): x\subseteq[\kappa]^{|a|}\}$ and for each $a\in[\lambda]^{<\omega}$ the set $F_a:=\{x:(a,x)\in F\}$ is a filter on $[\kappa]^{|a|}$. The \emph{support} of $F$ is $\supp(F)=\{a\in[\lambda]^{<\omega}:F_a\neq\{[\kappa]^{|a|}\}\}$. In what follows we will identify each finite subset of $\lambda$ with the corresponding unique increasing enumeration of its elements; in other words, $a\in[\lambda]^{<\omega}$ will be identified with the function $s_a:|a|\to a$ listing the elements of $a$ in increasing order. Given $a,b\in[\lambda]^{<\omega}$ with $a\subseteq b$, let $s:|a|\to|b|$ be the unique function such that $a(n)=b(s(n))$. For $x\in \wp([\kappa]^{|a|})$, define $x_{a,b}=\{\<u_i:i<|b|\>\in[\kappa]^{|b|}\st \<u_{s(j)}:j<|a|\>\in x\}$. A $(\kappa,\lambda)$-system of filters $F$ is called \emph{compatible} if for all $a\subseteq b$ with $a,b\in\supp(F)$ one has
$x\in F_a$ if and only if $x_{a,b}\in F_b$. Given $a\in[\lambda]^{<\omega}$ and $x\in F_a^+$, we define $F'=\span\{F,(a,x)\}$, \emph{the span of $F$ and $(a,x)$}, to be the smallest $(\kappa,\lambda)$-system of filters such that $F\subseteq F'$ and $(a,x)\in F'$.

If $F$ is a single filter on $\kappa$, the usual Boolean algebra $\mathbb{B}_F:=\wp(\kappa)/F$ is forcing equivalent to the poset $\mathbb{E}_F$ whose conditions are filters of the form $\overline{F \cup \{ S \}}$, where $S \in F^+$;\footnote{Here $\overline{F \cup \{ S \} }$ denotes the filter generated by $F \cup \{S \}$.} the poset $\mathbb{E}_F$ is ordered by reverse inclusion.  The following definition generalizes the latter poset to \emph{systems} of filters:
\begin{definition}\cite[Definition 4.3(v)]{Claverie:IdealsIdealExtendersAndForcingAxioms}\label{def_AssocForcing}
Given a $(\kappa,\lambda)$-system of filters $F$, the \emph{forcing associated to $F$} is denoted by $\E_F$ and consists of all conditions $p=F^p$ where $F^p$ is a compatible $(\kappa,\lambda)$-system of filters such that $\supp(p)=\supp(F^p)\subseteq \supp(F)$ and $F^p$ is generated by one point $x\in F_a^+$ for some $a\in \supp(p)$, i.e. $F^p=\span\{F,(a,x)\}$. The ordering on $\E_F$ is defined by $p\leq q$ if and only if $\supp(q)\subseteq\supp(p)$ and for all $a\in\supp(q)$, $F^q_a\subseteq F^p_a$, in other words $F^q\subseteq F^p$.
\end{definition}

If $F$ is a compatible $(\kappa,\lambda)$-system of filters and $\dot{G}$ is the $\mathbb{E}_F$-name for the generic object, let $\dot{E}_F$ be the name for $\bigcup \dot{G}$. Then clearly $\dot{E}_F$ is forced to be a $(\kappa,\lambda)$-system of filters, and indeed, as shown in \cite{Claverie:IdealsIdealExtendersAndForcingAxioms}, each $(\dot{E}_F^G)_a=(\bigcup G)_a$ is a $V$-ultrafilter. Claverie defines two additional combinatorial properties of $(\kappa,\lambda)$-systems of filters, \emph{potential normality} and \emph{precipitousness} (see \cite[Definition 4.4]{Claverie:IdealsIdealExtendersAndForcingAxioms}) such that the following two facts hold. First, if $F$ is a compatible potentially normal $(\kappa,\lambda)$-system of filters, then $\dot{E}_F^G$ is a $(\kappa,\lambda)$-extender over $V$; secondly, if $F$ is in addition precipitous, then the generic ultrapower $j_{\dot{E}_F^G}:V\to \Ult(V,\dot{E}_F^G)$ is well-founded.

\begin{definition}\cite[Definition 4.5]{Claverie:IdealsIdealExtendersAndForcingAxioms}\label{def_IdealExtender}
Let $\kappa<\lambda$ be ordinals. $F$ is a \emph{$(\kappa,\lambda)$-ideal extender} if it is a compatible and potentially normal $(\kappa,\lambda)$-system of filters such that for each $a\in\supp(F)$, the filter $F_a$ is ${<}\kappa$-closed.
\end{definition}
\noindent As noted in the introduction, precipitous $(\kappa,\lambda)$-ideal extenders can be used to witness generic strongness, in a way similar to that in which precipitous ideals can be used to witness generic measurability or generic supercompactness:

\begin{definition}\label{def_ideally_strong}\cite[Definition 4.9]{Claverie:IdealsIdealExtendersAndForcingAxioms} A regular cardinal $\kappa$ is called \emph{ideally strong} if and only if for all $A\subseteq\ORD$, $A\in V$, there is some precipitous $(\kappa,\lambda)$-ideal extender $F$ such that whenever $G$ is $\E_F$-generic, and $\dot{E}_F^G$ is the corresponding $V$-$(\kappa,\lambda)$-extender, one has $A\in\Ult(V,\dot{E}^G_F)$. 
\end{definition}

Ideal strongness is a specific form of generic strongness, which is defined as follows:\footnote{Definition \ref{def_generically_strong} is equivalent to a first-order definition; in particular we could just require the domain of $j$ to be $H^V_{\theta_{\mathbb{P},A}}$ where $\theta_{\mathbb{P},A} \ge |\mathbb{P}|^{+V}$ and $A \in H_{\theta_{\mathbb{P},A}}$.  Then $\theta_{\mathbb{P},A}$ will always be uncountable in $V^{\mathbb{P}}$ and by well-known arguments such embeddings lift to domain $V$.}
\begin{definition}\label{def_generically_strong}\cite[Definition 4.20]{Claverie:IdealsIdealExtendersAndForcingAxioms}
A cardinal $\kappa$ is \emph{generically strong} if for every set of ordinals $A$, in $V$, there is a poset $\mathbb{P}$ such that in $V^{\mathbb{P}}$ there is a definable embedding $j: V \to M$ with $\text{crit}(j) = \kappa$, $M$ wellfounded, and $A \in M$.
\end{definition}

Let us briefly mention a characterization of ideally strong cardinals which we will use in our proof of Theorem \ref{maintheorem}.

\begin{lemma}\label{lem_ideally_strong_characterization}
A regular cardinal $\kappa$ is ideally strong (as in Definition \ref{def_ideally_strong}) if and only if for every $\lambda\geq\kappa$ there is a precipitous $(\kappa,\lambda)$-ideal extender $F$ such that whenever $G$ is $(V,\E_F)$-generic the generic ultrapower $j_{G}:V\to M=\Ult(V,\dot{E}^G_F)$ satisfies the following properties. 
\begin{enumerate}
\item $\crit(j_{G})=\kappa$
\item $j_{G}(\kappa)>\lambda$
\item $H_\lambda^V\subseteq M$
\end{enumerate}
\end{lemma}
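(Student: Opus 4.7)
The plan is to prove both directions of the biconditional; the $(\Leftarrow)$ direction essentially unpacks Definition~\ref{def_ideally_strong}, while the $(\Rightarrow)$ direction requires a coding argument followed by a restriction of an ideal extender to the desired length.

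For $(\Leftarrow)$, given any $A \in V$ with $A \subseteq \ORD$, I would pick $\lambda \ge \kappa$ large enough that $A \in H_\lambda^V$ and apply the hypothesis at $\lambda$. The resulting precipitous $(\kappa,\lambda)$-ideal extender $F$ satisfies $A \in H_\lambda^V \subseteq M$ by property~(3), which is exactly the content of ideal strongness.

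For $(\Rightarrow)$, fix $\lambda \ge \kappa$ and canonically code $H_\lambda^V$ by a set of ordinals $A$, arranging (via the usual coding of the $\in$-relation on transitive closures) that each individual element of $H_\lambda^V$ is coded by a bounded subset of $\lambda$. By ideal strongness, there is a precipitous $(\kappa,\lambda')$-ideal extender $F'$ whose generic ultrapower $M' = \Ult(V,\dot E_{F'}^{G'})$ contains $A$; enlarging if necessary we may take $\lambda' \ge \lambda$. Standard facts about $(\kappa,\lambda')$-extenders give $\crit(j') = \kappa$ and $j'(\kappa) > \lambda'$, and decoding $A$ inside $M'$ yields $H_\lambda^V \subseteq M'$.

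To obtain an ideal extender of type exactly $(\kappa,\lambda)$, I would set
\[
F := F' \cap \big\{(a,x) : a \in [\lambda]^{<\omega},\ x \subseteq [\kappa]^{|a|}\big\},
\]
the restriction of $F'$ to seeds in $\lambda$. Compatibility, potential normality, and ${<}\kappa$-closure of the $F_a$ are local conditions that transfer from $F'$, so $F$ is a $(\kappa,\lambda)$-ideal extender. The forcing $\mathbb{E}_F$ is a regular subforcing of $\mathbb{E}_{F'}$, so any $(V,\mathbb{E}_F)$-generic $G$ extends (in a further extension) to an $\mathbb{E}_{F'}$-generic $G'$, producing a factor map $k : M \to M'$ with $j' = k \circ j$ and $\crit(k) \ge \lambda$. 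Then $M$ embeds into the well-founded $M'$, so $F$ is precipitous; and since $k$ fixes every bounded subset of $\lambda$, each coded element of $H_\lambda^V \subseteq M'$ already lies in $M$, giving~(3). Properties (1) and (2) are standard consequences of $F$ being a $(\kappa,\lambda)$-extender. The main obstacle is this factor-map bookkeeping: verifying that $\mathbb{E}_F$ embeds regularly into $\mathbb{E}_{F'}$ and that the induced $k$ has critical point at least $\lambda$, so that the coding of $H_\lambda^V$ transfers from the long ultrapower $M'$ down to the short one $M$.
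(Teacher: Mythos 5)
First, a caveat: the paper states Lemma \ref{lem_ideally_strong_characterization} without proof, so there is no argument of the authors' to compare yours against; your proposal has to stand on its own. Your $(\Leftarrow)$ direction is fine --- it is the routine unpacking of Definition \ref{def_ideally_strong}, choosing $\lambda$ large enough that $A\in H_\lambda^V$. The $(\Rightarrow)$ direction, however, has genuine gaps, all concentrated in the step you yourself flag as ``the main obstacle,'' and flagging an obstacle is not the same as overcoming it.

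The central problem is the assertion that $\mathbb{E}_F$, for $F=F'\cap\{(a,x): a\in[\lambda]^{<\omega}\}$, is a regular subforcing of $\mathbb{E}_{F'}$. The natural map $\span\{F,(a,x)\}\mapsto\span\{F',(a,x)\}$ does preserve order and incompatibility, but regularity is exactly the assertion that the restriction to $[\lambda]^{<\omega}$ of the generic system of $V$-ultrafilters added by $\mathbb{E}_{F'}$ is itself $(V,\mathbb{E}_F)$-generic, and a compatible system of $V$-ultrafilters extending $F$ need not be generic. Concretely, given $q=\span\{F',(b,y)\}$ with $b\not\subseteq\lambda$ and an antichain $\{\span\{F,(c,z_i)\}: i\in I\}$ of $\mathbb{E}_F$ with $c=b\cap\lambda$, compatibility of $q$ with the $i$-th member requires $y\cap(z_i)_{c,b}$ to be $F'_b$-positive; since the filters are only ${<}\kappa$-closed and $I$ may be large, every one of these intersections can be null even though $y$ and every $(z_i)_{c,b}$ are positive, so maximality need not transfer. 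You give no argument ruling this out, and without regularity the factor map $k$ is not available at all. Two further gaps: (i) ``enlarging if necessary we may take $\lambda'\ge\lambda$'' is unjustified --- Definition \ref{def_ideally_strong} hands you \emph{some} $\lambda'$, which you do not control; and (ii) even granting $k:M\to M'$ with $\crit(k)\ge\lambda$, your final step confuses ``$k$ fixes every bounded subset of $\lambda$ lying in $M$'' (true) with what you actually need, namely that every bounded subset of $\lambda$ lying in $M'$ --- in particular the codes of elements of $H_\lambda^V$ --- lies in the range of $k$ and hence in $M$. That does not follow from $\crit(k)\ge\lambda$ alone; it is precisely the issue behind the paper's own remark (citing Kanamori, Exercise 26.7) that capturing $V_\alpha$ requires extender length at least $(|V_\alpha|^+)^V$. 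A cleaner route avoids the restriction/factor-map detour entirely: code $H_\lambda^V$ (as a structure, together with a pairing function) by a single set of ordinals $A$, apply Definition \ref{def_ideally_strong} to $A$ to obtain a precipitous ideal extender whose generic ultrapower contains $A$ and hence contains $H_\lambda^V$ as an element, and then verify (1)--(3) directly from the extender axioms, addressing the length bookkeeping separately rather than by an unproven regular-embedding claim.
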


One can also use the $V_\alpha$-hierarchy to characterize ideally strong cardinals. However, to obtain $V_\alpha\subseteq M=\Ult(V,\dot{E}_F^G)$, the length of the generic extender $\dot{E}_F^G$ must be at least $(|V_\alpha|^+)^V$ (see \cite[Exercise 26.7]{Kanamori:Book}). For notational convenience, in what follows, we will use the characterization of ideally strong cardinals given in Lemma \ref{lem_ideally_strong_characterization}.

Just as one can produce precipitous ideals by collapsing below large cardinals, Claverie proves \cite[Lemma 4.8]{Claverie:IdealsIdealExtendersAndForcingAxioms} that if $\kappa$ is $\lambda$-strong in $V$ witnessed by a $(\kappa,\lambda)$-extender $E$, and $\mu<\kappa$ is a cardinal, then forcing with the Levy collapse $\Col(\mu,{<}\kappa)$ produces a model in which $\mu^+=\kappa$ and \[F=\{(a,x):\textrm{$x\subseteq[\kappa]^{|a|}$ and $\exists y$ such that $(a,y)\in E$ and $y\subseteq x$}\}\] is a precipitous $(\kappa,\lambda)$-ideal extender.

Notice that if $F$ is a filter, $F_0$ and $F_1$ are filter extensions of $F$, and $F_1 \nsubseteq F_0$ as witnessed by some $S \in F_1 - F_0$, then the filter $F'_0$ generated by $F_0 \cup \{ S^c \}$ extends $F_0$ and is incompatible with $F_1$, in the sense that there is no proper filter extension of $F'_0 \cup F_1$.  The following lemma is a generalization of this fact, and will be used in our proof of Theorem \ref{maintheorem}.
\begin{lemma}\label{lem_E_F_separative}
Given a $(\kappa,\lambda)$-system of filters $F$, the associated forcing $\mathbb{E}_F$ is a separative poset.
\end{lemma}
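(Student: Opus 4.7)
The plan is to prove separativity by the standard contrapositive: given $p \not\leq q$, I will produce $r \leq p$ that is incompatible with $q$. Since the ordering on $\mathbb{E}_F$ is $p\leq q \iff F^q \subseteq F^p$, the assumption $p\not\leq q$ hands me a witness $(b,y) \in F^q$ with $y\notin F^p_b$. Guided by the paragraph immediately preceding the lemma, the idea is to extend $p$ by adjoining the complement $(b, y^c)$, where $y^c := [\kappa]^{|b|}\setminus y$; any common extension with $q$ would then be forced to contain both $y$ and $y^c$ in the same filter, collapsing it.

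Writing $p = \span\{F,(a,x)\}$, I would first exploit compatibility of the system to replace $(b,y)$ by $(a\cup b, y_{b, a\cup b})$ if necessary, so that without loss of generality $a \subseteq b$. Under this reduction, $F^p_b$ is generated by $F_b \cup \{x_{a,b}\}$, so $y\notin F^p_b$ is equivalent to the statement that no $z \in F_b$ satisfies $z \cap x_{a,b} \subseteq y$.

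The main technical point is then to verify that $x_{a,b} \cap y^c \in F_b^+$, so that $r := \span\{F,(b, x_{a,b}\cap y^c)\}$ is a legitimate condition of $\mathbb{E}_F$ in the single-generator form required by Definition \ref{def_AssocForcing}. This positivity is immediate from the previous characterization: for every $z \in F_b$ one has $z \cap x_{a,b} \cap y^c \neq \emptyset$, since otherwise $z\cap x_{a,b} \subseteq y$. By construction $F^p_b \subseteq F^r_b$, and propagating this along the compatibility maps yields $F^p \subseteq F^r$ at every level of the support, so $r \leq p$.

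Finally, incompatibility of $r$ with $q$ is clear: any common extension $s \leq r,q$ would have $y, x_{a,b}\cap y^c \in F^s_b$, and closing under finite intersections inside the filter $F^s_b$ produces $\emptyset$, contradicting that $F^s$ is a system of proper filters. The only real obstacle I anticipate is bookkeeping: carefully tracking the compatibility maps $x \mapsto x_{a,b}$ to confirm that the proposed $r$ does indeed fit the single-generator form of Definition \ref{def_AssocForcing} and that $F^p\subseteq F^r$ really holds on all of $\supp(F^p)$, not merely at the index $b$. Once that is verified, the proof is short.
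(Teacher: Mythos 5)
Your proof is correct and follows essentially the same route as the paper's: extract a witness $(c,z)\in F^q$ with $z\notin F^p_c$, move everything to the common index $a\cup c$, check that the intersection of the generator of $p$ with the complement of the witness is $F$-positive there, and span off that single point to get $r\le p$ with $r\perp q$. The paper phrases the positivity step via $(F^p_{a\cup c})^+\subseteq F^+_{a\cup c}$ rather than your "no $z\in F_b$ has $z\cap x_{a,b}\subseteq y$" reformulation, but these are the same computation.
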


\begin{proof}
Suppose $p,q\in\mathbb{E}_F$ are conditions and that $p\nleq q$. We have $p=\span\{F,(a,x)\}$ and $q=\span\{F,(b,y)\}$ for some $a,b\in[\lambda]^{<\omega}$ and for some $x\in (F_a)^+$ and $y\in (F_b)^+$. Since $p\nleq q$ we have that $F^q_c\not\subseteq F^p_c$ for some $c\in\supp(q)$. Let $z\in F^q_c\setminus F^p_c$, then it follows that $[\kappa]^{|c|}\setminus z\in (F^p_c)^+\subseteq F_c^+$. Since $F^p$ is a compatible $(\kappa,\lambda)$-system of filters and $x\in F^p_a$, we have $x_{a,a\cup c}\in F^p_{a\cup c}$ and similarly $([\kappa]^{|c|}\setminus z)_{c,a\cup c}\in (F^p_{a\cup c})^+$. Thus, $x_{a,a\cup c}\cap ([\kappa]^{|c|}\setminus z)_{c, a\cup c}\in (F^p_{a\cup c})^+\subseteq F_{a\cup c}^+$, which means that $r=\span\{F,(a\cup c,x_{a,a\cup c}\cap ([\kappa]^{|c|}\setminus z)_{c, a\cup c})\}$ is a condition in $\E_F$. Since $x_{a,a\cup c}\in F^r_{a\cup c}$ and thus $x\in F^r_a$, it follows that $r\leq p$. Furthermore, since $([\kappa]^{|c|}\setminus z)_{c,a\cup c}\in F^r_{a\cup c}$ we have $[\kappa]^{|c|}\setminus z\in F^r_c$, and thus $r\perp q$. Therefore $\E_F$ is separative.
\end{proof}

\subsection{Internal Approachability}

In this section we discuss the key technical issue which comes up in generalizing Theorem \ref{maintheorem} to the case in which $\kappa>\omega_1$.

The following definition is standard; see Foreman-Magidor~\cite{ForemanMagidor:LargeCardinalsandDefinable} for details:
\begin{definition}\label{defn_mu_internally_approachable}
Let $\mu$ be a regular cardinal.  A set $X$ with $\mu\subseteq X$ is called \emph{$\mu$-internally approachable} iff there is a $\subseteq$-increasing sequence $\vec{N} = \langle N_i \ : \ i < \mu \rangle$ such that:
\begin{itemize}
 \item $\vec{N}$ is $\subseteq$-continuous (i.e., $N_j=\bigcup_{i<j}N_i$ for each limit ordinal $j<\mu$);
 \item $|N_i| < \mu$ for each $i < \mu$;
 \item $X = \bigcup_{i < \mu} N_i$;
 \item $\langle N_i \ : \ i < j \rangle \in X$ for all $j < \mu$
\end{itemize}
We let $\text{IA}_\mu$ denote the class of $\mu$-internally approachable sets.  
\end{definition}

\begin{remark}
If $\vec{N}=\<N_i\st i<\mu\>$ witnesses that $X\in\text{IA}_\mu$, then $N_i\cap\mu\in\mu$ for club-many $i<\mu$. This is because $\mu$ is an uncountable regular cardinal and the assumptions on $\vec{N}$ imply that $\{N_i\st i<\mu\}$ is a closed unbounded subset of $P_\mu(X)$. It is then an easy abstract fact that every club subset of $P_\mu(X)$ contains a club subset of $z$ such that $z\cap\mu\in \mu$.
\end{remark}

Note that almost every countable subset of $H_\theta$ (for $\theta$ regular uncountable) is $\omega$-internally approachable; this is just because models of set theory are closed under finite sequences.  A simple closing-off argument shows that for any regular $\mu \le \theta$, the set $\text{IA}_\mu \cap \wp_{\mu^+}(H_\theta)$ is always stationary.  Note also that for any $X \in \wp_{\mu^+}(H_\theta)$: $X \in \text{IA}_\mu$ iff $H_X \in \text{IA}_\mu$, where $H_X$ is the transitive collapse of $X$.  We refer the reader to Foreman-Magidor~\cite{ForemanMagidor:LargeCardinalsandDefinable} for proofs of the following lemma.
\begin{lemma}\label{lem_GCH_equiv}
Suppose GCH holds.  Let $\theta \ge \mu$ be regular cardinals.  Then the following sets are equal modulo the nonstationary ideal on $\wp_{\mu^+}(H_\theta)$.
\begin{itemize}
 \item $\text{IA}_\mu \cap \wp_{\mu^+}(H_\theta)$;
 \item The set of $N \in \wp_{\mu^+}(H_\theta)$ such that ${}^{<\mu} N \subset N$. 
\end{itemize}
\end{lemma}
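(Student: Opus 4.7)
The plan is to exhibit a club $C\subseteq \wp_{\mu^+}(H_\theta)$ on which the two sets coincide; this suffices for equality modulo the nonstationary ideal. Fix a well-ordering of $H_\theta$ and let $C$ consist of those $X\prec H_\theta$ with $\mu\subseteq X$ and with the well-ordering in $X$. Each such $X$ has $|X|=\mu$, since $X\in\wp_{\mu^+}(H_\theta)$ forces $|X|\le\mu$ while $\mu\subseteq X$ forces $|X|\ge\mu$. I will show that for $X\in C$, $X\in\text{IA}_\mu$ if and only if ${}^{<\mu}X\subseteq X$.

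The direction ${}^{<\mu}X\subseteq X\Rightarrow X\in\text{IA}_\mu$ does not require GCH. Given $X\in C$, fix an external enumeration $X=\{x_\alpha:\alpha<\mu\}$ and recursively define $\vec{N}=\langle N_i:i<\mu\rangle$ by setting $N_0=\emptyset$, taking $N_{i+1}$ to be the Skolem hull in $H_\theta$ of $N_i\cup\{x_i\}$ (using the fixed well-ordering), and $N_i=\bigcup_{j<i}N_j$ at limits. By induction each $N_j\in X$: at a successor stage $i+1$, $N_i\in X$ and $x_i\in X$, so elementarity gives $N_{i+1}\in X$; at a limit $i$, the initial segment $\vec{N}\restrict i$ is a sequence of length $i<\mu$ from $X$, hence lies in ${}^{<\mu}X\subseteq X$, and $N_i$ is definable from it. Regularity of $\mu$ gives $|N_i|<\mu$; continuity is built in; $\bigcup_{i<\mu}N_i=X$ because $x_\alpha\in N_{\alpha+1}$; and $\vec{N}\restrict i\in X$ follows directly from the closure hypothesis.

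For the converse, suppose $\vec{N}$ witnesses $X\in\text{IA}_\mu$ and fix $f\colon\alpha\to X$ with $\alpha<\mu$. For each $\beta<\alpha$ pick $i_\beta<\mu$ with $f(\beta)\in N_{i_\beta}$; regularity of $\mu$ makes $i^*:=\sup_{\beta<\alpha}i_\beta<\mu$, and $\subseteq$-monotonicity yields $\ran(f)\subseteq N_{i^*}$. Since $\vec{N}\restrict(i^*+1)\in X$, elementarity gives $N_{i^*}\in X$, and hence ${}^{\alpha}N_{i^*}\in X$. Under GCH, $|{}^{\alpha}N_{i^*}|=|N_{i^*}|^{|\alpha|}\le\mu$, since both cardinals sit below $\mu$ (a short case analysis on whether $\mu$ is a successor or a limit). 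Pick by elementarity a bijection $g\in X$ from some cardinal $\nu\le\mu$ onto ${}^{\alpha}N_{i^*}$; since $\nu\subseteq\mu\subseteq X$ and $g\in X$, we obtain ${}^{\alpha}N_{i^*}\subseteq X$, so in particular $f\in X$.

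The main obstacle is the cardinal arithmetic at the last step: when $\mu=\lambda^+$ is a successor and $\max(|N_{i^*}|,|\alpha|)=\lambda$, GCH only gives $|{}^{\alpha}N_{i^*}|\le\lambda^+=\mu$, not strictly less, so ${}^{\alpha}N_{i^*}$ is not absorbed into $X$ merely by having cardinality below $|X|$. The workaround, which makes essential use of the requirement $\mu\subseteq X$ built into the definition of $\text{IA}_\mu$, is to transfer through a bijection $g\in X$ whose domain $\nu$ sits inside $\mu\subseteq X$, so that every value $g(\xi)$ is forced into $X$ by elementarity.
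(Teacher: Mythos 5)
The paper itself gives no proof of this lemma; it defers to Foreman--Magidor, and your argument is the standard one found there. It is essentially correct: on a club of elementary submodels $X\prec(H_\theta,\in,<_\theta)$ with $\mu\subseteq X$, the backward direction builds the approachable chain from an enumeration of $X$, using ${}^{<\mu}X\subseteq X$ to get limit initial segments into $X$, and the forward direction catches $\ran(f)$ in some $N_{i^*}\in X$ and then pulls ${}^{\alpha}N_{i^*}$ into $X$ via a bijection in $X$ whose domain is a cardinal $\nu\le\mu\subseteq X$ --- which is exactly the right way to handle the case $|{}^{\alpha}N_{i^*}|=\mu$, and is where GCH enters. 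One small repair is needed in the first direction: the full Skolem-hull operation is not definable over $(H_\theta,\in,<_\theta)$ (Tarski), so ``elementarity gives $N_{i+1}\in X$'' is not literally available; the easiest fix is to drop hulls entirely and set $N_i=\{x_\alpha:\alpha<i\}$, observing that $\langle N_j : j<i\rangle$ is definable (by an absolute set-theoretic operation) from $\langle x_\alpha:\alpha<i\rangle\in{}^{<\mu}X\subseteq X$, and that Definition \ref{defn_mu_internally_approachable} does not require the $N_i$ to be elementary submodels.
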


If $H$ is a countable transitive model and $\mathbb{P} \in H$ is a poset, then one can use a diagonalization argument to build $(H,\mathbb{P})$-generic filters.  If $H$ is uncountable then the diagonalization argument still works, provided that $H$ is internally approachable and believes $\mathbb{P}$ is sufficiently closed and well-orderable:

\begin{lemma}\label{lem_IA_gives_generics}
Suppose $H$ is a transitive $ZF^-$ model, $\mu>\omega$ is a cardinal, $H \in \text{IA}_\mu$, $\Q \in H$ is a poset which is ${<}\mu$-closed in $H$, and for every $N \in H$ if $|N|^V < \mu$ then $|N|^H < \mu$.  Suppose $H$ has some wellorder of $\Q$.  Then for every $p \in \Q$ there exists a $g \subset \Q$ such that $p \in g$ and $g$ is $(H, \Q)$-generic.
\end{lemma}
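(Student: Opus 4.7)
The plan is to perform a transfinite diagonalization of length $\mu$, carried out internally in $H$ so that the ${<}\mu$-closure of $\Q$ in $H$ applies at every limit stage. Fix a witness $\vec{N} = \langle N_i : i < \mu \rangle$ to $H \in \text{IA}_\mu$; by shifting indices we may assume $\Q$, $p$, and a wellorder $<_\Q$ of $\Q$ all lie in $N_0$. For each $i < \mu$ define $\mathcal{D}_i := \{D \in N_i : D \text{ is a dense subset of } \Q\}$; then $\mathcal{D}_i \in H$, $\mathcal{D}_i \subseteq N_i$, and by the cardinality hypothesis $|\mathcal{D}_i|^H \le |N_i|^H < \mu$, while $\bigcup_{i<\mu} \mathcal{D}_i$ is exactly the family of dense subsets of $\Q$ lying in $H$.

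I would construct a descending sequence $\langle p_i : i < \mu \rangle$ in $\Q$ with $p_0 = p$ such that (i) for every $i < \mu$ and every $D \in \mathcal{D}_i$ there is $r \in D$ with $p_{i+1} \le r$, and (ii) for each $\lambda < \mu$ the initial segment $\langle p_j : j < \lambda \rangle$ lies in $H$. Given such a sequence, $g := \{q \in \Q : \exists i < \mu,\ p_i \le q\}$ is a filter on $\Q$ containing $p$; for any dense $D \in H$, choosing $i$ with $D \in \mathcal{D}_i$ and applying (i) produces $r \in D$ with $r \in g$, so $g$ is $(H, \Q)$-generic as required.

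At a successor step $i \mapsto i+1$, I would work inside $H$: since $\mu$ remains a cardinal in $H$ (bijections are absolute between $H$ and $V$) and $|\mathcal{D}_i|^H < \mu$, one enumerates $\mathcal{D}_i$ inside $H$ in ordertype below $\mu$ using the wellordering resources of $H$ together with $<_\Q$; then ${<}\mu$-closure of $\Q$ in $H$ yields a descending chain below $p_i$ meeting every member of $\mathcal{D}_i$, and one takes $p_{i+1}$ to be the $<_\Q$-least lower bound produced by this canonical procedure. At a limit stage $\lambda < \mu$, the crucial observation is that $\langle p_j : j < \lambda \rangle$ already belongs to $H$: it is defined by a transfinite recursion inside $H$ whose parameters are $p$, $<_\Q$, and the initial segment $\langle N_j : j < \lambda \rangle$, and the last of these lies in $H$ by the defining property of IA; so by Replacement in $H$ the sequence is in $H$. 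Then ${<}\mu$-closure of $\Q$ in $H$ provides a lower bound and $<_\Q$ selects $p_\lambda$ canonically.

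The main obstacle is condition (ii). The ${<}\mu$-closure hypothesis is available only inside $H$, so invoking it at limit stages forces the entire recursion to stay inside $H$. The IA hypothesis delivers each initial segment $\langle N_j : j < \lambda \rangle$ as an element of $H$, and the wellorder of $\Q$ supplies the canonical choices that turn each step of the recursion into an $H$-definable operation on $H$-parameters; together these ingredients secure (ii), and hence the construction.
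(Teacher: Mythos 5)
Your proposal is correct and follows essentially the same route as the paper's proof: use the internally approachable chain $\langle N_i : i < \mu\rangle$ to split the dense sets of $H$ into $\mu$-many blocks of $H$-cardinality ${<}\mu$, meet each block at successor stages via the ${<}\mu$-closure of $\Q$ in $H$, and use the wellorder of $\Q$ plus the fact that $\langle N_j : j < \lambda\rangle \in H$ to keep every initial segment of the construction inside $H$ so that closure applies at limits. The only cosmetic difference is that the paper packages the successor step as choosing the $\Delta$-least ``weakly $N_{i+1}$-generic'' condition, which is exactly your enumeration-plus-closure argument.
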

\begin{proof}
Fix some $p \in \Q$.  Let $\Delta \in H$ be a wellorder of $\Q$, and let $\langle N_i \ : \ i < \mu \rangle$ witness that $H \in \text{IA}_{\mu}$.  Note that without loss of generality we can assume that:
\begin{enumerate}
 \item $p \in N_0$ and $\Delta \in N_0$;
 \item for all $\ell < \mu$, $\langle N_i \ : \ i \le \ell \rangle \in N_{\ell+1}$
 \item $N_i \prec H$ for all $i < \mu$
\end{enumerate}
For a set $N$ and a condition $r$, let us call $r$ a \emph{weakly $N$-generic condition} iff for every $D \in N$ which is dense, there is some $s \ge r$ such that $s \in D$; note we do \textbf{not} require that $s \in D \cap N$, so the upward closure of $r$ might fail to be an $(N, \mathbb{Q})$-generic filter in the usual sense.
 
 Recursively construct a descending sequence $\langle p_i \ : \ i < \mu \rangle$ as follows:
\begin{itemize}
 \item $p_0:=p$;
 \item Assuming $p_i$ has been defined and is an element of $N_{i+1}$, let $p_{i+1}$ be the $\Delta$-least weakly $N_{i+1}$-generic condition below $p_i$. Such a condition exists because $|\{D\in N_{i+1}:\textrm{$D\subseteq\Q$ is dense below } p_i \}|^H \le |N_{i+1}|^H < \mu$ and $\Q$ is ${<}\mu$-closed in $H$.
 \item Assuming $j < \mu$ is a limit ordinal and $\langle p_i \ : \ i < j \rangle$ has been defined, let $p_j$ be the $\Delta$-least lower bound of $\langle p_i \ : \ i < j \rangle$ in $\Q$.  Such a condition exists because  $\<p_i\st i<j\>\in H$ follows from $\<N_i\st i<j\>\in H$ (and $H \models$ $\mathbb{Q}$ is ${<}\mu$-closed).
\end{itemize}
A straightforward inductive proof shows that $p_j \in N_{j+1}$ for all $j$, that $\langle p_i \ : \ i < \ell \rangle \in N_{\ell+1}$ for all limit ordinals $\ell$, and that whenever $j$ is a successor ordinal then $p_j$ is a weakly generic condition for $N_j$.  It follows (since $H = \bigcup_{i < \mu} N_i$) that $\{ p_i \ : \ i < \mu \}$ generates an $H$-generic filter. 
\end{proof}

The role of $\text{IA}_\mu$ in the theory of generic embeddings (with critical point $\ge \omega_2$) has been explored in detail in Foreman-Magidor~ \cite{ForemanMagidor:LargeCardinalsandDefinable}. It also plays a role in the current paper.

\begin{definition}\label{def_GenericStrongIA}
Let $\mu$ be a regular cardinal and suppose $\kappa=\mu^+$.  We say that $\kappa$ is \emph{generically strong on $\text{IA}_\mu$} if and only if for every $\lambda > \kappa$ and $A \in V$ there is a poset $\P$ such that in $V^\P$ there is a definable embedding $j:V \to M$ witnessing that $\kappa$ is generically strong for $A$ as in Definition \ref{def_generically_strong}, which in addition has the following properties\footnote{The requirement that $M^{<\mu}\subseteq M$ is somewhat ad hoc and unnecessary for the results of this article when $\kappa=\omega_1$ and $\mu=\omega$; see Remark \ref{rem_RedundantAtOmega1}. We suspect that the main results of this article for $\kappa>\omega_1$ can be proven without using this ad hoc requirement. However, currently we do not see how to eliminate its use in the proof of Claim \ref{claim_Mgeneric} of Theorem \ref{theoremsurgery} below.} in $V^{\mathbb{P}}$:  
\begin{enumerate}
 \item\label{item_ClosedLessMu} $M^{<\mu}\subseteq M$ and
 \item\label{item_IA} $M \models {H}^V_{\lambda}\in \text{IA}_\mu$.
\end{enumerate}


We say that $\kappa$ is \emph{ideally strong on $\text{IA}_\mu$} if and only if for every cardinal $\lambda\geq\kappa$ there is a precipitous $(\kappa,\lambda^+)$-ideal extender $F$, whose associated generic ultrapower $j:V\to M$ satisfies requirements (\ref{item_ClosedLessMu}) and (\ref{item_IA}).
\end{definition}

\begin{remark}
Of course, if $\kappa$ is ideally strong on $\text{IA}_\mu$ then $\kappa$ is generically strong on $\text{IA}_\mu$. 
\end{remark}
\begin{remark}\label{rem_RedundantAtOmega1}
Notice that if $\kappa=\omega_1$ and $\mu = \omega$, then ``$\kappa$ is generically strong on $\text{IA}_\mu$" is equivalent to ``$\kappa$ is generically strong", and ``$\kappa$ is ideally strong on $\text{IA}_\mu$" is equivalent to ``$\kappa$ is ideally strong".  This is simply because all the relevant models are closed under \emph{finite} (i.e.\ $<\mu$) sequences.
\end{remark}


Now we are ready to state a theorem generalizing Theorem \ref{maintheorem} to include the case $\kappa>\omega_1$.  

\begin{theorem}\label{maintheorem_generalized} 
Suppose $\mu$ is a regular cardinal, $\kappa=\mu^+$, and $2^{<\kappa}=\kappa$. If $\kappa$ is generically strong on $\text{IA}_\mu$ then this is preserved by adding any number of Cohen subsets of $\kappa$.  Moreover, if $\kappa$ is ideally strong on $\text{IA}_\mu$, then this is preserved by adding any number of Cohen subsets of $\kappa$. 
\end{theorem}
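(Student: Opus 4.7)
The plan is to parallel the structure the paper outlines for Theorem \ref{maintheorem}: apply the surgery theorem (Theorem \ref{theoremsurgery}) to preserve generic strongness through $\Add(\kappa,\lambda_0)$, and additionally invoke the duality theorem for ideal extenders (Theorem \ref{thm_Duality_Extenders}) to handle the ideal strongness case. The IA hypotheses in Definition \ref{def_GenericStrongIA} are designed precisely so that the surgery argument, originally developed for strong cardinals with $\kappa$-closed target models, can still be run when $\kappa>\omega_1$ and the target model is only ${<}\mu$-closed.

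Fix $\P=\Add(\kappa,\lambda_0)$ for arbitrary $\lambda_0$, let $g\subseteq\P$ be $V$-generic, and fix $\lambda>\kappa$ together with $A\in V[g]$; choose a name $\dot A\in V$ for $A$, and let $\lambda^*$ be regular with $\lambda^*\geq\lambda,\lambda_0,|\P|$ and $\P,\dot A\in H^V_{\lambda^*}$. Using that $\kappa$ is generically strong on $\text{IA}_\mu$ in $V$, fix a poset $\Q\in V$ forcing the existence of an embedding $j:V\to M$ with $\crit(j)=\kappa$, $H^V_{\lambda^*}\subseteq M$, $M^{<\mu}\subseteq M$ in $V^\Q$, and $M\models H^V_{\lambda^*}\in \text{IA}_\mu$. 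In $V^\Q$ we have $\P\in M$ and $j(\P)=\Add(j(\kappa),j(\lambda_0))^M$. Now invoke Theorem \ref{theoremsurgery} to lift $j$ through $\P$: the theorem produces, possibly after further forcing below $\Q$, an $\tilde g$ that is $j(\P)$-generic over $M$ with $j " g\subseteq\tilde g$, and hence a lift $\tilde j:V[g]\to M[\tilde g]$ extending $j$. The IA and closure requirements transfer from $M$ to $M[\tilde g]$ because $\P$ is much smaller than $\lambda^*$ and fewer than $\mu$ Cohen coordinates appear in any sufficiently small subobject. Thus $\kappa$ is generically strong on $\text{IA}_\mu$ in $V[g]$.

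For the ideal strongness part, given $\lambda$ we instead start with a precipitous $(\kappa,\lambda^*)$-ideal extender $F\in V$ witnessing ideal strongness on $\text{IA}_\mu$, and take $\Q=\E_F$; the generic embedding is $j:V\to M=\Ult(V,\dot E^G_F)$. Applying Theorem \ref{theoremsurgery} as before yields a lifting $\tilde j:V[g]\to M[\tilde g]$. Let $\tilde F\in V[g]$ denote the $(\kappa,\lambda^*)$-ideal extender derived from $\tilde j$, by setting $(a,x)\in\tilde F$ iff $j " a\in\tilde j(x)$ for each $a\in[\lambda^*]^{<\omega}$ and $x\subseteq[\kappa]^{|a|}$ in $V[g]$. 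Theorem \ref{thm_Duality_Extenders} will then identify the forcing $\E_{\tilde F}$ in $V[g]$ with an appropriate quotient of $\E_F$ inside a two-step iteration that also accounts for the new Cohen subsets of $\kappa$, and will conclude that the generic ultrapower of $V[g]$ by $\tilde F$ is well-founded; that is, $\tilde F$ is precipitous in $V[g]$. Together with $A\in M[\tilde g]$ this gives ideal strongness on $\text{IA}_\mu$ in $V[g]$.

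The main obstacle is the surgery step. In the supercompactness setting one uses master conditions; here the target $M$ is only ${<}\mu$-closed, so one must instead (i) diagonalize inside $M$ to produce an $M$-generic $\tilde g$ for $j(\P)$ via Lemma \ref{lem_IA_gives_generics}, which needs $H^V_{\lambda^*}\in\text{IA}_\mu^M$ together with a wellorder of $j(\P)$ inside $M$ (inherited from $2^{<\kappa}=\kappa$ in $V$ via $H^V_{\lambda^*}\subseteq M$), and (ii) surgically modify $\tilde g$ on the coordinates in $j " \lambda_0$ so as to contain $j " g$ without destroying $M$-genericity, exploiting the standard fact that any $<\kappa$-sized dense subset of $j(\P)$ in $M$ depends on only boundedly many of the modified coordinates. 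The requirement $M^{<\mu}\subseteq M$ is used precisely so that intermediate objects of the diagonal construction remain inside $M$. For the ideal case the further challenge is adapting Foreman's duality calculation from single filters to the compatible $(\kappa,\lambda^*)$-systems of filters defining ideal extenders, which is the content of Theorem \ref{thm_Duality_Extenders}.
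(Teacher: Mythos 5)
There is a genuine gap at the very first step of both halves of your argument: you fix an arbitrary $(V,\P)$-generic $g$ and then assert that Theorem \ref{theoremsurgery} lifts $j$ through that particular $g$ to obtain $\tilde j:V[g]\to M[\tilde g]$ with $j[g]\subseteq\tilde g$. But Theorem \ref{theoremsurgery} does not lift through a generic handed to it from outside; it \emph{constructs its own} $(V,\P)$-generic filter $g$ as an element of $M$ (via Lemma \ref{lem_IA_gives_generics} applied inside $M$ to $H^V_\eta\in\text{IA}_\mu$), containing only a single prescribed condition $p$. Membership $g\in M$ is essential to the surgery: the modified conditions $p^*$ are shown to lie in $M$ (hence to be conditions of $j(\P)$) precisely because $Q\restriction j[W]$ is computable inside $M$ from $j\restriction W$ and $g$. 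For an externally given $V$-generic $g$, mutual genericity gives $V[G]\cap V[g]=V$, so for suitable $W\in V$ of size $\kappa$ the restriction $Q\restriction W$ lies in neither $V$ nor $M\subseteq V[G]$; the surgically modified filter $h^*$ would then not consist of elements of $j(\P)$ and the lift would not exist. So the sentence ``hence a lift $\tilde j:V[g]\to M[\tilde g]$ extending $j$'' is unjustified for the $g$ you fixed, and the rest of the argument does not get off the ground.

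The paper closes exactly this gap with a transfer step that your proposal omits: assume toward a contradiction that some condition $p_0$ in the actual generic $g'$ forces, for some fixed $\eta$, that $\kappa$ fails to be ideally (resp.\ generically) strong on $\text{IA}_\mu$ in $V[\dot g]$; run Theorem \ref{theoremsurgery} with $p=p_0$ to produce a $(V,\P)$-generic $g\ni p_0$ with $g\in M$ together with the lift; then feed the $\E_F$-name $\tau$ for this $g$ into Theorem \ref{thm_Duality_Extenders} to extract a precipitous $(\kappa,\eta^+)$-ideal extender $F(\dot E)\in V[g]$ whose generic ultrapowers are the lifted embeddings, contradicting what $p_0$ forces. (A weak-homogeneity argument for $\Add(\kappa,\theta)$ could serve the same purpose, but some such step is indispensable.) A secondary slip: your $\tilde F$ defined by $(a,x)\in\tilde F\iff j[a]\in\tilde j(x)$ is the generic \emph{extender}, which lives only in the further extension by the quotient forcing; the object that lives in $V[g]$ is the derived ideal extender of Definition \ref{def_DerivedIdealExtender}, defined by the Boolean values $\llbracket(\check a,\check S)\in\dot E\rrbracket=1$ computed in $\ro(\E_F/e[g])$, and it is this object whose precipitousness part (\ref{item_Precipitous}) of Theorem \ref{thm_Duality_Extenders} delivers.
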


The remainder of the paper is a proof of Theorem \ref{maintheorem_generalized}.  Note that by Remark \ref{rem_RedundantAtOmega1}, Theorem \ref{maintheorem_generalized} implies Theorem \ref{maintheorem}.


\section{Lifting generic embeddings using surgery}\label{sec_Surgery}

\begin{theorem}\label{theoremsurgery}
Suppose that $\mu\geq\omega$ is a regular cardinal, $\kappa=\mu^+$, and $2^{<\kappa}=\kappa$. Let $\theta$ and $\eta$ be cardinals with $\eta>\theta^+$, $\eta\geq\theta^\kappa$, $\eta^\kappa=\eta$ and $2^{<\eta}\leq\eta^+$. Further suppose $\mathbb{E}$ is a poset such that 
\begin{center}
\begin{tabular}{rp{3.5in}}
$\forces^V_{\mathbb{E}}$ & there is a $(\kappa,\eta^+)$-extender $\dot{E}$ over the ground model $V$ with well-founded ultrapower $j_{\dot{E}}:{V}\to {M}$ such that $\crit(j_{\dot{E}})=\kappa$, $j_{\dot{E}}(\kappa)>\eta^+$, ${H}_{\eta^+}^V\subseteq {M}$, ${M}^{<\mu}\subseteq{M}$ and ${M}\models {H}^V_{\eta}\in \text{IA}_\mu$.
\end{tabular}
\end{center}
Then given any condition $p\in\Add(\kappa,\theta)^V$, the poset $\E$ also forces that there is an ultrafilter $g$ which is $(V,\Add(\kappa,\theta))$-generic with $p\in g$, and that the ultrapower map $j_{\dot{E}}$ can be lifted to $j^*_{\dot{E}}:V[g]\to M[j^*_{\dot{E}}(g)]$ where $H_{\eta^+}^{V[g]}\subseteq M[j^*_{\dot{E}}(g)]$, $M[j^*_{\dot{E}}(g)]^{<\mu}\subseteq M[j^*_{\dot{E}}(g)]$, and $M[j^*_{\dot{E}}(g)]\models H_\eta^{V[g]}\in \text{IA}_\mu$.
\end{theorem}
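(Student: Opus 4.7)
The plan is to work throughout in $V[h]$, where $h$ is $V$-generic for $\mathbb{E}$. In $V[h]$ we have the extender $E=\dot{E}^h$, the wellfounded ultrapower $j=j_E:V\to M$, and all the stated closure and IA data. Writing $\mathbb{Q}:=\Add(\kappa,\theta)^V$ and fixing $p\in\mathbb{Q}$, the task is to produce in $V[h]$ a $(V,\mathbb{Q})$-generic filter $g\ni p$ together with an $(M,j(\mathbb{Q}))$-generic filter $G^*$ satisfying $j " g\subseteq G^*$; the lift $j^*:V[g]\to M[G^*]$ is then defined in the standard way, and its target-side closure and approachability properties remain to be checked.

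I would first construct $g$. The hypothesis $M\models H^V_\eta\in\text{IA}_\mu$ supplies a chain $\vec{N}=\langle N_i:i<\mu\rangle\in M$, which continues to witness $H^V_\eta\in\text{IA}_\mu$ in the ambient $V[h]$. The cardinal arithmetic hypotheses ($\eta\geq\theta^\kappa$, $\eta^\kappa=\eta$, $2^{<\eta}\leq\eta^+$) ensure, after routine bookkeeping, that $\mathbb{Q}$ and all its $V$-dense subsets can be enumerated along $\vec{N}$, and $\mathbb{Q}$ is certainly ${<}\mu$-closed. Feeding these into Lemma \ref{lem_IA_gives_generics} produces a $(V,\mathbb{Q})$-generic filter $g$ with $p\in g$. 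A parallel $\mu$-length diagonalization in $V[h]$—using $M^{<\mu}\subseteq M$ to keep partial descending sequences in $M$, the chain $\vec N$ to enumerate, and $H^V_{\eta^+}\subseteq M$ to locate the relevant $M$-dense subsets of $j(\mathbb{Q})$—produces an auxiliary $(M,j(\mathbb{Q}))$-generic filter $G^*_0\in V[h]$.

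Next comes the surgery. Externally, modify each $q\in G^*_0$ to a condition $q^\dagger\in j(\mathbb{Q})$ by overwriting $q$ on $\dom(q)\cap(j " \theta\times\kappa)$ with the corresponding values of $\bigcup j " g$, and let $G^*$ be the upward closure of $\{q^\dagger:q\in G^*_0\}$. Containment $j " g\subseteq G^*$ is easy: for each $p'\in g$, the set $\{q\in j(\mathbb{Q}):\dom(q)\supseteq\dom(j(p'))\}$ is $M$-dense, so some $q\in G^*_0$ dominates $\dom(j(p'))$, and then $q^\dagger\leq j(p')$. The nontrivial point, and the main obstacle of the whole argument, is genericity of $G^*$ over $M$: for any dense $D\in M$ one must exhibit $q\in G^*_0$ with $q^\dagger\in D$. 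This is carried out by the standard surgery density argument, showing that the set of $q\in j(\mathbb{Q})$ all of whose $2$-valued surgeries on $\dom(q)\cap(j " \theta\times\kappa)$ land in $D$ is itself $M$-dense below any given condition. The hypothesis $M^{<\mu}\subseteq M$ is invoked here precisely to ensure that certain length-${<}\mu$ sequences of candidate surgery patterns, which are chosen externally in $V[h]$, actually lie in $M$, which is essential for running the density argument inside $M$.

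Finally I would define $j^*:V[g]\to M[G^*]$ by $j^*(\sigma^g):=j(\sigma)^{G^*}$ and invoke the standard generic-lifting lemma to verify elementarity and $\crit(j^*)=\kappa$. The target-side requirements—$H^{V[g]}_{\eta^+}\subseteq M[G^*]$, $M[G^*]^{<\mu}\subseteq M[G^*]$, and $M[G^*]\models H^{V[g]}_\eta\in\text{IA}_\mu$—would then be obtained by pulling the $V$-side analogues through the ${<}\kappa$-closure of $\mathbb{Q}$, and by modifying the old IA-witness $\vec N$ via a $\mathbb{Q}$-name for $g$ to build the new IA-witness inside $M[G^*]$.
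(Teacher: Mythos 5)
Your overall architecture matches the paper's (build a $(V,\Add(\kappa,\theta))$-generic filter $g$, build an $(M,j(\Add(\kappa,\theta)))$-generic filter, overwrite it along $j[\theta\times\kappa]$ with $\bigcup j[g]$, and lift), but the two steps where the real work happens are described in a way that would not go through.

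First, you never verify that the modified conditions $q^\dagger$ actually lie in $M$, and the mechanism you propose for the genericity of $G^*$ cannot work as stated. A condition $q\in j(\Add(\kappa,\theta))$ can have $|\dom(q)\cap\ran(j)|$ as large as $\kappa$, so the hypothesis $M^{<\mu}\subseteq M$ (with $\mu<\kappa$) is far too weak to put the surgery data into $M$; that hypothesis is not what drives the surgery at all --- the paper uses it only in Claim \ref{claim_Mgeneric}, to see that $j(H^V_\eta)$ is ${<}\mu$-closed in the extension so that the diagonalization of Lemma \ref{lem_IA_gives_generics} can be run there. Moreover, the set of $q$ ``all of whose $2$-valued surgeries on $\dom(q)\cap j[\theta\times\kappa]$ land in $D$'' is defined from $j[\theta\times\kappa]$, which is not an element of $M$; so even if that set were dense, it is not a member of $M$ and the $M$-genericity of $G^*_0$ gives you no way to meet it. What actually makes both steps work is Lemma \ref{lemmajrestrictW}: for every $W\in([H_{\eta^+}]^{\le\kappa})^V$ one has $j\restrict W\in M$ (a consequence of $H^V_{\eta^+}\subseteq M$ and elementarity, not of any closure of $M$). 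Each $q^\dagger$ is then computed inside $M$ from $j\restrict W$ and $g$ for a suitable such $W$; and for genericity one takes a maximal antichain $A\in M$, notes that the union $d$ of the domains of its elements satisfies $d\cap\ran(j)\subseteq j[W]$ for some such $W$, checks that $Q\restrict j[W]$ and $G^*_0\restrict j[W]$ both lie in $M$, and uses the automorphism of $j(\Add(\kappa,\theta))$ in $M$ flipping the bits where they disagree to transport $A$ to a maximal antichain met by $G^*_0$, whence $A$ is met by $G^*$.

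Second, all of this requires $g\in M$, and your construction does not ensure it: you run the diagonalization ``in the ambient'' extension of $V$ by $\E$. The paper instead applies Lemma \ref{lem_IA_gives_generics} \emph{inside} $M$ to $H^V_\eta$ (which $M$ sees as $\mu$-internally approachable), producing $g\in M$ (Claim \ref{claim_Vgeneric}). This is needed both to put $Q\restrict j[W]$ into $M$ in the surgery step and, at the end, to get $H^{V[g]}_{\eta^+}=H^V_{\eta^+}[g]\subseteq M\subseteq M[j^*(g)]$; ``pulling through the ${<}\kappa$-closure of $\Add(\kappa,\theta)$'' does not yield $g\in M[G^*]$, since recovering $g$ from $G^*$ would require $j\restrict\theta$-type information that $M[G^*]$ need not contain.
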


\begin{proof}
Let $\kappa\geq\omega_1^V$ and suppose $\theta$ and $\eta$ are cardinals with $\eta>\theta^+$, $\eta\geq\theta^\kappa$, $\eta^\kappa=\eta$ and $2^{<\eta}\leq\eta^+$. Suppose there is a partial order $\E$ such that if $G$ is $(V,\E)$-generic then there is a $V$-$(\kappa,\eta^+)$-extender $E\in V':=V[G]$ such that the ultrapower $j:V\to M=\Ult(V,E)\subseteq V'$ is well-founded and has the following properties.  

\begin{enumerate}
\item $\eta^+< j(\kappa)$ and the critical point of $j$ is $\kappa$.
\item $H_{\eta^+}^V\subseteq M$
\item $M^{<\mu}\subseteq M$
\item $M=\{j(f)(a)\st a\in[\eta^+]^{<\omega} \land f:[\kappa]^{|a|}\to V\land f\in V\}$
\item $M\models H^V_{\eta}\in \text{IA}_\mu$
\end{enumerate}
Let us define $\P=\Add(\kappa,\theta)^V$ and fix a condition $p\in\P$. Our goal is to show that in $V'$, there is a $(V,\P)$-generic filter $g$, with $p\in g$, such that the embedding $j$ can be lifted to domain $V[g]$ in such a way that the lifted embedding $j^*:V[g]\to M[j^*(g)]$ is a class of $V'$ and satisfies the following: $H^{V[g]}_{\eta^+}\subseteq M[j^*(g)]$, $M[j^*(g)]^{<\mu}\cap V'\subseteq M[j^*(g)]$, and $M[j^*(g)]\models H_\eta^{V[g]}\in \text{IA}_\mu$. To accomplish this we will build a $(V,\P)$-generic filter $g$ in $M$, an $(M,j(\P))$-generic filter $H$ in $V[G]$, and then we will modify $H$ to obtain another $(M,j(\P))$-generic filter $H^*$ with $j"g\subseteq H^*$.

\begin{lemma}\label{lemmajrestrictW}
If $W\in([H_{\eta^+}]^{\leq\kappa})^V$ then $j\restrict W\in M$.
\end{lemma}

\begin{proof}
Suppose $W\in ([H_{\eta^+}]^{\leq\kappa})^V$. First notice that $W\in M$ since $([H_{\eta^+}]^{\leq\kappa})^V\subseteq H_{\eta^+}^V\subseteq M$. Furthermore, $j[W]\in M$ because the elements of $W$ can be enumerated in a $\kappa$ sequence $\vec{W}$, and then $j[W]$ is obtained as the range of the sequence $j(\vec{W})\restrict\kappa\in M$. By elementarity $(W,\in)\cong (j[W],\in)$, and thus the Mostowski collapse of $(W,\in)$ equals that of $(j[W],\in)$. Thus, in $M$, there are isomorphisms 
\[(W,\in)\stackrel{\pi_0}{\longrightarrow}(\bar{W},\in)\stackrel{\pi_1}{\longrightarrow}(j[W],\in)\]
where $\pi_0$ is the Mostowski collapse of $W$ and $\pi_1$ is the inverse of the Mostowski collapse of $j[W]$. It follows that $j\restrict W=\pi_1\circ\pi_0\in M$.
\end{proof}


\begin{center}
\begin{tikzpicture}[scale=0.5,>=latex]
\draw[thick] (0,0) -- (0,5.3);
\draw[thick] (3,0) -- (3,5.3); 
\draw[->,thick] (0,1) -- (3,3.2) [];

\draw[thick] (0,0) node [anchor=north] {$V$};
\draw[thick] (-0.15,0) -- (0.15,0);
\draw[thick] (-0.15,1) node [anchor=east] {$\kappa=(\mu^+)^V$} -- (0.15,1);

\draw[thick] (-0.15,2.2) -- (0.15,2.2);

\draw[thick] (-0.15,3.2) -- (0.15,3.2);
\draw[thick] (-0.15,5) node [anchor=east] {$(\mu^+)^{V'}$} -- (0.15,5);

\draw[thick] (3,0) node [anchor=north] {$M$};
\draw[thick] (2.85,0) -- (3.15,0);
\draw[thick] (2.85,1) -- (3.15,1);
\draw[thick] (2.85,2.2) -- (3.15, 2.2) node [anchor=west] {$\theta$};
\draw[thick] (2.85,3.2) -- (3.15,3.2) node [anchor=west] {$j(\kappa)=(\mu^+)^M$};
\draw[thick] (2.85,5) -- (3.15,5);
\end{tikzpicture}
\end{center}

\begin{claim}\label{claim_Vgeneric}
There is a $(V,\Add(\kappa,\theta))$-generic filter $g\in M$ such that $p\in g$.
\end{claim}

\begin{proof}[Proof of Claim \ref{claim_Vgeneric}]

Working in $M$, we would like to apply Lemma \ref{lem_IA_gives_generics} to the structure $H=H^V_{\eta}$ and the poset $\Q=\Add(\kappa,\theta)\in H$. We will show that our assumption $M\models H^V_{\eta}\in\text{IA}_\mu$ implies that all of the other hypotheses of Lemma \ref{lem_IA_gives_generics} hold. First let us show that, working in $M$, for $N\in H$, if $|N|^M<\mu$ then $|N|^H<\mu$. Assume not, then for some $N\in H$ we have $|N|^M<\mu$ and $|N|^V=|N|^H \geq \mu$. Let $z\in P(N)^V$ with $|z|^V=\mu$, then by elementarily, $|j(z)|^{M}=\mu$. From Lemma \ref{lemmajrestrictW}, it follows that $j\restrict z\in M$ and since $|z|^V<\crit(j)=\kappa$ we have $j(z)=j"z\in M$. Thus $M\models$ $\mu=|j(z)|=|j"z|=|z|\leq|N|$, a contradiction. It is easy to verify that all of the other hypotheses of Lemma \ref{lem_IA_gives_generics} are met, and hence we may apply Lemma \ref{lem_IA_gives_generics} within $M$ to obtain a $(H,\Add(\kappa,\theta))$-generic filter $g$ with $p\in g$. Since $H=H^V_{\eta}$, it follows that $g$ is also $(V,\Add(\kappa,\theta))$-generic.
\end{proof}

\begin{claim}\label{claim_Mgeneric}
There is an $(M,j(\Add(\kappa,\theta)))$-generic filter $h\in V'=V[G]$. 
\end{claim}

\begin{proof}
All of $M$'s dense subsets of $j(\Add(\kappa,\theta))$ are elements of $j(H^V_\eta)$, so it suffices to prove that, in $V[G]$, the model $j(H^V_\eta)$ and the poset $j(\Add(\kappa,\theta)) \in j(H^V_\eta)$ satisfy all the requirements of Lemma \ref{lem_IA_gives_generics}, which will yield the desired generic object $h$.  The proof uses  the somewhat ad hoc requirement (\ref{item_ClosedLessMu}) of Definition \ref{def_GenericStrongIA} (though this requirement is redundant when $\kappa = \omega_1$, by Remark \ref{rem_RedundantAtOmega1}).  We do not know if this requirement can be removed, but suspect it is possible.  

Now clearly $j(H^V_\eta)$ believes that $j(\Add(\kappa,\theta))$ is ${<}j(\kappa)$-closed, and so in particular ${<}\mu$-closed, i.e., ${<}j(\mu)$-closed. Also, notice that $V[G]\models j(H_\eta)^V\in\text{IA}_\mu$ because $M\models H_\eta^V\in\text{IA}_\mu$ and if $\vec{N}=\<N_i\st i<\mu\>\in M$ witnesses this, then $\vec{N}\restrict\ell\in H_\eta^V\in V$ for each $\ell<\mu$, and hence $\<j(N_i)\st i<\mu\>\in V[G]$. To show that $j(H^V_\eta)$ and $j(\Add(\kappa,\theta))$ satisfy the remaining requirements of Lemma \ref{lem_IA_gives_generics} (from the point of view of $V[G]$), it suffices to prove:
\begin{enumerate}
 \item\label{item_ImageSmallInVG} $V[G] \models |j(H^V_\eta)| \le \mu$; and
 \item\label{item_ImageClosed} $V[G] \models$ $j(H^V_\eta)$ is closed under ${<}\mu$-sequences.
\end{enumerate}
To see item (\ref{item_ImageSmallInVG}):  since 
\begin{equation*}
M=\{j(f)(a)\st a\in[\eta^+]^{<\omega} \land f:[{\kappa}]^{|a|}\to V\land f\in V\}
\end{equation*}
it follows that every element of $j(H^V_\eta)$ is of the form $j(f)(a)$ where $a\in[\eta^+]^{<\omega}$ and $f:[{\kappa}]^{|a|}\to H^V_\eta$ is some function in $V$. Thus, $|j(H^V_\eta)|^{V[G]} \leq (\eta^+)^V \cdot | ^{\kappa}(H^V_\eta)|^V=\eta^{+V}$; the last equality follows from our assumption that $(2^{<\eta})^V=(\eta^+)^V$.  Since $\eta^{+V} < j(\kappa)$ by assumption, $\eta^{+V} < j(\kappa) = j(\mu^+) = \mu^{+M} \le \mu^{+V[G]}$.  This completes the proof of item (\ref{item_ImageSmallInVG}).

To see item (\ref{item_ImageClosed}):  $V$ believes that $H^V_\eta$ is closed under ${<}\mu$ sequences, so by elementarity $M$ believes that $j(H^V_\eta)$ is closed under ${<}\mu$ sequences.  The ad hoc assumption (\ref{item_ClosedLessMu}) of Definition \ref{def_GenericStrongIA} says that $M$ is closed under ${<}\mu$ sequences in $V[G]$.  Thus item (\ref{item_ImageClosed}) trivially follows.
\end{proof}

We will modify the $(M,j(\P))$-generic $h$ given by Claim \ref{claim_Mgeneric}, to obtain $h^*$ such that $h^*$ is still $(M,j(\P))$-generic and $j[g]\subseteq h^*$.

Working in $V[G]$, we may define a function $Q:=\bigcup j[g]$ where $\dom(Q)=j[\theta]\times\kappa$. Given $p\in h\subseteq \Add(j(\kappa),j(\theta))^M$ we define $p^*$ to be the function with $\dom(p^*)=\dom(p)$ such that $p^*(\alpha,\beta)=p(\alpha,\beta)$ if $(\alpha,\beta)\in\dom(p)-\dom(Q)$ and $p^*(\alpha,\beta)=Q(\alpha,\beta)$ if $(\alpha,\beta)\in\dom(Q)\cap\dom(p)$. Observe that the subset of $\dom(p)$ on which modifications are made is contained in the range of $j$:
\[\Delta_p:=\{(\alpha,\beta)\in\dom(p)\st p^*(\alpha,\beta)\neq p(\alpha,\beta)\}\subseteq j[\theta]\times\kappa\subseteq\ran(j).
\]
Now we let $h^*:=\{p^*\st p\in h\}\in V[G]$ and argue that $h^*$ is an $(M,j(\P))$-generic filter.

\begin{lemma}
If $p\in h$ then $h^*\in j(\P)$.
\end{lemma}

\begin{proof}
It follows that $p=j(f)(a)$ where $a\in[\eta^+]^{<\omega}$ and $f:[\kappa]^{|a|}\to \P$ is a function in $V$. Let $W:=\bigcup\{\dom(q)\st q\in\ran(f)\}\subseteq\theta\times\kappa$. Clearly $W\in([H_\theta]^{\leq\kappa})^V$ and thus $j\restrict W\in M$ by Lemma \ref{lemmajrestrictW}. Furthermore, we have $\dom(p)\subseteq j(W)$, which implies $\dom(p)\cap\ran(j)\subseteq j[W]$. Observe that any modifications made in obtaining $p^*$ from $p$ must have been made over points in $j[W]$; in other words, $\Delta_p \subseteq j[W]$.
Working in $M$, we can use $j\restrict W$ and $g$ to define a function $Q_p$ with $\dom(Q_p)=j[W]$ as follows. Define $Q_p(\alpha,\beta)=i$ if and only if $g(j^{-1}(\alpha,\beta))=i$. It follows that $Q_p=Q\restrict j[W]\in M$. Since $p^*$ can be obtained by comparing $p$ and $Q_p$, it follows that $p^*$ is in $M$, and is thus a condition in $j(\P)$.
\end{proof}

\begin{lemma}
$h^*$ is an $(M,j(\P))$-generic filter.
\end{lemma}
\begin{proof}
Suppose $A$ is a maximal antichain of $j(\P)$ in $M$ and let $d:=\bigcup\{\dom(r)\st r\in A\}$. We have $|d|^M\leq j(\kappa)$ by elementarity since $\P$ is $(2^{<\kappa})^+$-c.c. in $V$ and $2^{<\kappa}=\kappa$ by hypothesis. Since $d\in M$ there is an $a\in[\eta^+]^{<\omega}$ and a function $f:\kappa\to[\theta\times\kappa]^{\leq\kappa}$ in $V$ such that $d=j(f)(a)$. It follows that $W:=\bigcup \ran(f)\in ([H_\theta]^{\leq\kappa})^V$ and hence $j\restrict W\in M$ by Lemma \ref{lemmajrestrictW}. Since $d\in \ran(j(f))$ it follows that $d\subseteq j(W)$ and hence $d\cap\ran(j)\subseteq j[W]$.  We also have $j[W]\in M$. Working in $M$, we can define a function $Q_A$ with $\dom(Q_A)=j[W]$ by letting $Q_A(\alpha,\beta)=i$ if and only if $g(j^{-1}(\alpha,\beta))=i$. Notice that $Q_A=Q\restrict j[W]$ and $|j[W]|^M\leq\kappa$. By the ${<}j(\kappa)$-distributivity of $j(\P)$ in $M$ we have that $h\restrict j[W]\in M$ and thus $h\restrict j[W]\in j(\P)$. Furthermore, since $Q_A\in M$, there is an automorphism $\pi_A\in M$ of $j(\P)$ which flips precisely those bits at which $Q_A$ and $h\restrict j[W]$ disagree. Since $\pi_A^{-1}[A]\in M$ is a maximal antichain of $j(\P)$ and $h$ is $(M,j(\P))$-generic, it follows that there is a condition $q\in \pi_A^{-1}[A]\cap h$ and by applying $\pi_A$ we see that $q^*=\pi_A(q)\in A\cap h^*$.
\end{proof}

Since $j[g]\subseteq h^*$, it follows that the embedding $j:V\to M$ lifts to $j^*:V[g]\to M[j^*(g)]$ where $j^*(g)=h^*$. It remains to argue that $H^{V[g]}_{\eta^+}\subseteq M[j^*(g)]$, $M[j^*(g)]^{<\mu}\cap V'\subseteq M[j^*(g)]$, and $M[j^*(g)]\models H_\eta^{V[g]}\in \text{IA}_\mu$.

To show that $H^{V[g]}_{\eta^+}\subseteq M[j^*(g)]$ we note that because $g\in M$ and $H_{\eta^+}^V\subseteq M$ we have $H_{\eta^+}^{V[g]}=H^V_{\eta^+}[g]\subseteq M\subseteq M[j^*(g)]$. One can easily verify that $M[j^*(g)]^{<\mu}\cap V'\subseteq M[j^*(g)]$ using the fact that $j(\P)=\Add(j(\kappa),j(\theta))^M$ is ${<}j(\kappa)$-distributive in $M$. 

It remains to show that $M[j^*(g)]\models H_\eta^{V[g]}\in \text{IA}_\mu$. Since we assumed $M\models H_\eta^V\in \text{IA}_\mu$, there is a $\subseteq$-increasing sequence $\vec{N}=\<N_i\st i<\mu\>\in M$ satisfying all the requirements of Definition \ref{defn_mu_internally_approachable}: 
\begin{itemize}
 \item $N_\ell=\bigcup_{i<\ell}N_i$ for each limit $\ell<\mu$,
 \item $|N_i|^M < \mu$ for each $i < \mu$,
 \item $H_\eta^V = \bigcup_{i < \mu} N_i$, and
 \item $\langle N_i \ : \ i < \ell \rangle \in H_\eta^V$ for all $\ell < \mu$.
\end{itemize}
Furthermore, without loss of generality we can assume that
\begin{itemize}
\item $\P=\Add(\kappa,\theta)^V\in N_0$ and
\item $N_i\elemsub H_\eta^V$.
\end{itemize}
We now argue that the sequence $\<N_i[g]\st i<\mu\>\in M[j^*(g)]$ witnesses that $M[j^*(g)]\models H_\eta^{V[g]}\in \text{IA}_\mu$ where \[N_i[g]=\{\tau_{g} \st \text{$\tau\in N_i$ is a $\P$-name}\};\]
that is, we will prove the following.
\begin{enumerate}
 \item $N_\ell[g]=\bigcup_{i<\ell}N_i[g]$ for each limit $\ell < \mu$,
 \item $|N_i[g]|^{M[j^*(g)]} < \mu$ for each $i < \mu$,
 \item $H_\eta^{V[g]} = \bigcup_{i < \mu} (N_i[g])$, and
 \item $\langle N_i[g] \ : \ i < \ell \rangle \in H_\eta^{V[g]}$ for all $\ell < \mu$.
\end{enumerate}

Item $(1)$ follows directly from the definition of $N_i[g]$ and the fact that $\vec{N}$ is $\subseteq$-continuous. For $(2)$, we have $|N_i|^M<\mu$, which implies that $|N_i[g]|^{M[j^*(g)]}<\mu$.  For (3), $H_\eta^{V[g]}=H_\eta^V[g]=\bigcup_{i<\mu}(N_i[g])$. To establish $(4)$, we remark that by assumption we have $\<N_i\st i<\ell\>\in H_\eta^V$ for each $\ell<\mu$, and since $g\in H_\eta^{V[g]}=H_\eta^V[g]$, it follows that we can build $\<N_i[g]\st i<\ell\>$ working in $H_\eta^{V[g]}$.


This concludes the proof of Theorem \ref{theoremsurgery}.
\end{proof}

Theorem \ref{theoremsurgery} already implies the portion of Theorem \ref{maintheorem_generalized} which deals with preservation of generically strong cardinals:
\begin{corollary}\label{cor_PreserveGenericStrong}
If $\kappa= \mu^+$ is generically strong on $\text{IA}_\mu$ then it remains generically strong on $\text{IA}_\mu$ after adding any number of Cohen subsets of $\kappa$.
\end{corollary}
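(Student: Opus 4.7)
Fix $V$-generic $g$ for $\Add(\kappa,\nu)^V$, a set of ordinals $A\in V[g]$, and $\lambda>\kappa$; the plan is to exhibit a poset in $V[g]$ whose forcing extension contains a generic strongness embedding of $V[g]$ with $A$ in the target. First choose $\eta\geq\max(\lambda,\nu)$ large enough that the cardinal arithmetic in the hypothesis of Theorem~\ref{theoremsurgery} is satisfied (taking $\theta=\nu$) and $A\in H_{\eta^+}^{V[g]}$; then pick $\mathbb{E}\in V$ witnessing generic strongness of $\kappa$ on $\text{IA}_\mu$ in $V$ at a level high enough to satisfy all the hypotheses of Theorem~\ref{theoremsurgery}. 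The claim will be that $\mathbb{E}$ itself, viewed as a poset in $V[g]$, witnesses the generic strongness of $\kappa$ in $V[g]$ for $A,\lambda$.

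Let $G$ be $V[g]$-generic for $\mathbb{E}$. Because $\mathbb{E}\in V$ is a $V$-poset, the product lemma applied to $\Add(\kappa,\nu)^V\times\mathbb{E}$ gives $V[g][G]=V[G][g]$ with $g$ and $G$ mutually $V$-generic; in particular $g$ is $V[G]$-generic for $\Add(\kappa,\nu)^V$. In $V[G]$ we then have the generic extender embedding $j:V\to M$ coming from $\mathbb{E}$, satisfying the hypotheses of Theorem~\ref{theoremsurgery}.

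I would then construct, inside $V[g][G]$, a lift $\tilde{j}:V[g]\to M[h^*]$ of $j$ by adapting the surgery argument in the proof of Theorem~\ref{theoremsurgery} to our specific $g$ in place of an internally constructed generic. Working in $V[g][G]$, I would build an $(M[g],j(\P))$-generic filter $h_0$ via Lemma~\ref{lem_IA_gives_generics} applied to $H=j(H^V_\eta)[g]$ and $\Q=j(\P)$, where $\P=\Add(\kappa,\nu)^V$; the required IA-property of $H$ in $V[g][G]$ is witnessed by $\<N_i[g]\st i<\mu\>$, obtained from the given $M$-witness $\<N_i\st i<\mu\>\in M$ of $M\models j(H^V_\eta)\in\text{IA}_\mu$. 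Then set $h^*:=\{p^*\st p\in h_0\}$, with $p^*$ obtained from $p$ as in Theorem~\ref{theoremsurgery} by overwriting the bits on $\dom(p)\cap(j[\nu]\times\kappa)$ with the pullback of $g$ through $j$. Define the lift by $\tilde{j}(\tau_g):=j(\tau)_{h^*}$ for $\P$-names $\tau\in V$.

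The main obstacle will be verifying that $h^*$ is $(M,j(\P))$-generic. In Theorem~\ref{theoremsurgery}, this is verified by producing, for each maximal antichain $A\in M$ of $j(\P)$, an automorphism $\pi_A\in M$ built from the internal generic $g$ supplied by Claim~\ref{claim_Vgeneric}; this step requires the generic to lie in $M$, and our $g$ does not. The resolution is that $h_0$ is chosen $(M[g],j(\P))$-generic, not merely $(M,j(\P))$-generic, so that the automorphism $\pi_A$ (which now lies in $M[g]$ rather than $M$) still determines a maximal antichain $\pi_A^{-1}[A]\in M[g]$ of $j(\P)$ which $h_0$ must meet; applying $\pi_A$ to an element of $\pi_A^{-1}[A]\cap h_0$ then yields $q^*\in A\cap h^*$ exactly as in Theorem~\ref{theoremsurgery}. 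Once $h^*$ is $(M,j(\P))$-generic, elementarity of $\tilde{j}$ follows from the forcing theorem, and the conditions $H_{\eta^+}^{V[g]}\subseteq M[h^*]$ (hence $A\in M[h^*]$), ${}^{{<}\mu}M[h^*]\cap V[g][G]\subseteq M[h^*]$, and $M[h^*]\models H^{V[g]}_\eta\in\text{IA}_\mu$ all follow as at the end of the proof of Theorem~\ref{theoremsurgery}, using that $g\in V[g]$ is recoverable inside $M[h^*]$ from $h^*$ together with $j\restrict W$ information available in $M$.
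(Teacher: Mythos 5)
There is a genuine gap at the final step of your argument. By forcing with $\E$ over $V[g]$ you make $G$ \emph{mutually} generic with $g$, so the internally constructed generic $\tau_G \in M$ supplied by Claim~\ref{claim_Vgeneric} is necessarily different from your external $g$, and $g \notin M$. Your repair of the genericity argument for $h^*$ (taking $h_0$ to be $(M[g],j(\P))$-generic so that $\pi_A \in M[g]$ and $\pi_A^{-1}[A]$ is still a maximal antichain met by $h_0$) is plausible, but the last sentence of your proposal is where things break: you need $H^{V[g]}_{\eta^+}\subseteq M[h^*]$, and in particular $A\in M[h^*]$ for an arbitrary set of ordinals $A\in V[g]\setminus V$. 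In the paper this follows from $g\in M$, since then $H^{V[g]}_{\eta^+}=H^V_{\eta^+}[g]\subseteq M$. In your setting one would need $g\in M[h^*]$, and recovering $g$ from $h^*$ requires knowing $j\restrict \P$, equivalently $j\restrict(\nu\times\kappa)$, inside $M[h^*]$. Lemma~\ref{lemmajrestrictW} only gives $j\restrict W\in M$ for $|W|^V\le\kappa$, so for $\nu>\kappa$ (the whole point of ``any number of Cohen subsets'') there is no reason that $j[\nu]$, hence $g$, lies in $M[h^*]$; nothing in the hypotheses ($H^V_{\eta^+}\subseteq M$, $M^{<\mu}\subseteq M$, $M\models H^V_\eta\in\text{IA}_\mu$) or in your construction arranges this. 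A secondary slip: $M\not\models j(H^V_\eta)\in\text{IA}_\mu$ (by elementarity that would require $V\models H^V_\eta\in\text{IA}_\mu$, which fails since $|H^V_\eta|^V>\mu$); the correct statement, used in Claim~\ref{claim_Mgeneric}, is that $V[G]\models j(H^V_\eta)\in\text{IA}_\mu$, witnessed by $\<j(N_i)\st i<\mu\>$ where $\<N_i\st i<\mu\>\in M$ witnesses $M\models H^V_\eta\in\text{IA}_\mu$.

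The paper's route avoids ever lifting onto a generic that lies outside $M$. Theorem~\ref{theoremsurgery} is stated for an arbitrary condition $p$ precisely to enable a density argument: if generic strongness on $\text{IA}_\mu$ failed in $V[g']$, some $p_0\in g'$ would force the failure; applying Theorem~\ref{theoremsurgery} with $p=p_0$ produces a $(V,\P)$-generic $g\ni p_0$ lying in $M$ together with a lift of $j$, and then the quotient $\ro(\E)/e[g]$ (where $e(p)=\llbracket\check{p}\in\tau\rrbracket$ is the complete embedding induced by the name $\tau$ for the internal generic) is a poset in $V[g]$ witnessing generic strongness there --- contradicting what $p_0$ forces. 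Equivalently, and closer in spirit to your direct approach: over your external $V[g]$ you should force with the quotient $\ro(\E)/e[g]$ rather than with $\E$ itself; the resulting $\E$-generic $G$ then satisfies $\tau_G=g$, so $g\in M_G$ and the conclusion of Theorem~\ref{theoremsurgery} applies verbatim, with no need to rerun the surgery for an external generic.
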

The next section combines Theorem \ref{theoremsurgery} with a generalization of Foreman's Duality Theorem to get preservation of ideally strong cardinals.

\section{Ideal extenders and duality}\label{sec_Duality}

This section generalizes portions of the ``duality theory" of Foreman~\cite{Foreman:CalculatingQuotientAlgebras} to the context of extender embeddings.

\subsection{Derived ideal extenders} 

The following definition and lemma are simple generalizations of the discussion in Section 3.2 of \cite{Foreman:Handbook}.
\begin{definition}\label{def_DerivedIdealExtender}
Suppose $\Q$ is a poset, $\kappa< \lambda$ are ordinals, and $\dot{E}$ is a $\Q$-name such that 
\begin{equation*}
\Vdash_{\Q} \dot{E} \text{ is a normal } (\kappa, \lambda) \text{-extender over } V
\end{equation*}
Then $F(\dot{E})$, called the \emph{ideal extender derived from $\dot{E}$}, is defined by: $(a,S) \in F(\dot{E})$ iff:
\begin{itemize}
 \item $a \in [\lambda]^{<\omega}$;
 \item $S \subseteq [\kappa]^{|a|}$; and 
 \item $\llbracket (\check{a}, \check{S}) \in \dot{E} \rrbracket_{\text{ro}(\Q)} = 1$.
\end{itemize}
The name ``ideal extender" will be justified in Lemma \ref{lem_IdealExtenderDerived} below.
\end{definition}

Note we do \emph{not} require that $\Q$ forces $\text{Ult}(V,\dot{E})$ to be well-founded in Definition \ref{def_DerivedIdealExtender}.

\begin{lemma}\label{lem_IdealExtenderDerived}
Suppose $\Q$ is a poset, $\kappa<\lambda$ are ordinals, and $\dot{E}$ is a $\Q$-name such that $\forces_\Q$ $\dot{E}$ is a normal $(\kappa,\lambda)$-extender over $V$. Let $F(\dot{E})$ be the ideal extender derived from $\dot{E}$ as in Definition \ref{def_DerivedIdealExtender}.  Then:
\begin{enumerate}
 \item\label{item_IndeedAnIdealExtender} $F(\dot{E})$ is a $(\kappa, \lambda)$-ideal extender in the sense of Definition \ref{def_IdealExtender}; let $\mathbb{E}_{F(\dot{E})}$ be the associated forcing as in Definition \ref{def_AssocForcing}.
 \item\label{item_BooleanEmbedding} The map
\begin{equation*}
\iota: \mathbb{E}_{F(\dot{E})} \to \ro(\mathbb{Q})
\end{equation*}
defined by
\begin{equation*}
p \mapsto \llbracket \check{F}^p \subset \dot{E}  \rrbracket_{\text{ro}(\mathbb{Q})} = \llbracket  \forall a \in \supp(\check{p})  \ \forall S \in \check{F}^p_a \ S \in \dot{E}_a  \rrbracket_{\mathrm{ro}(\mathbb{Q})}
\end{equation*} 
 is $\le$ and $\perp$ preserving (but not necessarily regular).  
 \item\label{item_GenericTransfer} If $H$ is $(V,\mathbb{Q})$-generic then $\bigcup\iota^{-1}[H] = \dot{E}_H$.
\end{enumerate}
\end{lemma}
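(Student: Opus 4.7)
The plan is to verify each item by translating combinatorial properties of $F(\dot E)$ into Boolean-valued statements in $\ro(\Q)$ about $\dot E$, systematically using that $\forces_\Q$ ``$\dot E$ is a normal $(\kappa,\lambda)$-extender over $V$''.

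For part (\ref{item_IndeedAnIdealExtender}), I would first show that each $F(\dot E)_a = \{S:\llbracket(\check a,\check S)\in\dot E\rrbracket=1\}$ is a ${<}\kappa$-complete filter: closure under supersets and finite intersections follows from $\forces_\Q$ ``$\dot E_{\check a}$ is a filter'', and ${<}\kappa$-completeness follows from the identity $\llbracket\bigcap_{\alpha<\gamma}\check S_\alpha\in\dot E_{\check a}\rrbracket=\bigwedge_{\alpha<\gamma}\llbracket\check S_\alpha\in\dot E_{\check a}\rrbracket$ together with the ${<}\kappa$-completeness forced of $\dot E_{\check a}$. Compatibility (the biconditional ``$S\in F_a$ iff $S_{a,b}\in F_b$'' for $a\subseteq b$ in $\supp F$) and potential normality then transfer from the corresponding properties of $\dot E$ as a normal extender, since each becomes an equality of Boolean values.

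For part (\ref{item_BooleanEmbedding}), $\le$-preservation is immediate: if $p\le q$ in $\mathbb{E}_{F(\dot E)}$ then $F^q\subseteq F^p$, whence $\llbracket F^p\subset\dot E\rrbracket\le\llbracket F^q\subset\dot E\rrbracket$. The substantive step is $\perp$-preservation. Writing $p=\span\{F,(a,S)\}$ and $q=\span\{F,(b,T)\}$, if $\iota(p)\wedge\iota(q)>0$ then below this positive value $\dot E$ contains both $(a,S)$ and $(b,T)$, and hence by the compatibility of $\dot E$ as an extender, $S_{a,a\cup b}\cap T_{b,a\cup b}\in\dot E_{a\cup b}$. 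Thus this intersection has positive Boolean value at $a\cup b$, placing it in $F_{a\cup b}^+$, so the condition $r:=\span\{F,(a\cup b,\,S_{a,a\cup b}\cap T_{b,a\cup b})\}$ exists in $\mathbb{E}_{F(\dot E)}$ and refines both $p$ and $q$, contradicting $p\perp q$.

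For part (\ref{item_GenericTransfer}), the inclusion $\bigcup\iota^{-1}[H]\subseteq\dot E_H$ is clear: $p\in\iota^{-1}[H]$ gives $\llbracket F^p\subset\dot E\rrbracket\in H$, so $V[H]\models F^p\subseteq\dot E_H$. For the reverse, given $(a,S)\in\dot E_H$, the Boolean value $\llbracket(\check a,\check S)\in\dot E\rrbracket$ lies in $H$ and is in particular positive, which forces $S\in F_a^+$; hence $p:=\span\{F,(a,S)\}$ is a condition of $\mathbb{E}_{F(\dot E)}$. A direct computation shows $\iota(p)=\llbracket(\check a,\check S)\in\dot E\rrbracket\in H$: the constraints ``$T\in\dot E_c$'' for $T\in F_c$ have Boolean value $1$ by definition of $F(\dot E)$, and the remaining constraints generated by $(a,S)$ collapse, via compatibility of $\dot E$, to the single value $\llbracket(\check a,\check S)\in\dot E\rrbracket$. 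So $p\in\iota^{-1}[H]$ and $(a,S)\in F^p$, as required. The main obstacle I anticipate is this final computation of $\iota(p)$ together with the bookkeeping around Claverie's precise formulation of potential normality, both of which require careful translation between the span-of-filters language and the Boolean-valued language for $\dot E$.
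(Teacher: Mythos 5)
Your proposal is correct and follows essentially the same route as the paper's proof: each combinatorial property of $F(\dot{E})$ is obtained by transferring the corresponding property forced of $\dot{E}$ through Boolean values, $\perp$-preservation is proved contrapositively by forming the span at $a\cup b$ of the intersected projections, and part (\ref{item_GenericTransfer}) is a direct computation (where your careful treatment of the reverse inclusion, via $\iota(p)=\llbracket(\check a,\check S)\in\dot E\rrbracket$, fills in exactly what the paper's one-line chain of equalities leaves implicit). The paper likewise defers the potential-normality bookkeeping to Claverie's thesis, so the one step you flag as delicate is handled there by citation rather than by a new idea.
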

\begin{proof}
(\ref{item_IndeedAnIdealExtender}) The fact that $F(\dot{E})$ is a $(\kappa,\lambda)$-ideal extender, as in Definition \ref{def_IdealExtender}, follows from the fact that the relevant properties hold for $\dot{E}$ with boolean-value one in $V^{\ro(\Q)}$.  It is easy to see that for each $a\in[\lambda]^{<\omega}$ the set $F(\dot{E})_a\subsetneq \wp([\kappa]^{|a|})$ is a filter since $\llbracket\dot{E}_a\textrm{ is an ultrafilter over $V$}\rrbracket_\Q=1$. To see that $F(\dot{E})$ is compatible we observe that the coherence property of $(\kappa,\lambda)$-extenders \cite[(ii) on page 154]{Kanamori:Book} holds for $\dot{E}$ with boolean value one in $V^{\ro(\Q)}$, and this immediately implies that if $a\subseteq b\in\supp(F)$ then $x\in F(\dot{E})_a \iff x_{a,b}\in F(\dot{E})_b$.  The potential normality of $F(\dot{E})$ follows from the assumption that $1_{\mathbb{Q}}$ forces $\dot{E}$ to be a normal extender; we refer the reader to the proof at the bottom of page 66 of Claverie~\cite{Claverie:IdealsIdealExtendersAndForcingAxioms} for the details.  Thus $F(\dot{E})$ is a $(\kappa,\lambda)$-ideal extender, as in Definition \ref{def_IdealExtender}.

(\ref{item_BooleanEmbedding}) Suppose $p\leq_{\mathbb{E}_{F(\dot{E})}} q$. This means that $\supp(q)\subseteq\supp(p)$ and for every $a\in\supp(q)$ we have $F^q_a\subseteq F^p_a$. Thus for $a\in\supp(q)$ we have $\llbracket\check{F}^p_a\subseteq\dot{E}_a\rrbracket\leq_{\ro(\Q)}\llbracket\check{F}^q_a\subseteq\dot{E}_a\rrbracket$. Thus
$\iota(p)=\bigwedge_{a\in\supp(p)}\llbracket\check{F}^p_a\subseteq\dot{E}_a\rrbracket\leq_{\ro(\Q)}\bigwedge_{a\in\supp(q)}\llbracket\check{F}^q_a\subseteq\dot{E}_a\rrbracket=\iota(q)$.

Suppose $p,q\in\mathbb{E}_{F(\dot{E})}$ where $p=\span\{F(\dot{E}),$ $(a,x)\}$ and $q=\span\{F(\dot{E}),$ $(b,y)\}$ and that $\iota(p)$ and $\iota(q)$ are compatible in $\ro(\Q)$, say $r\in\Q$ with $r\leq\iota(p),\iota(q)$. Then forcing with $\Q$ below $r$ yields an extension in which there is a $(\kappa,\lambda)$-extender $F$ such that $F^p\cup F^q\subseteq F$. Since $x\in F_a$ and $y\in F_b$, it follows that $x_{a,a\cup b}\cap y_{b,a\cup b}\in F_{a\cup b}$ and hence $x_{a,a\cup b}\cap y_{b,a\cup b}\in F(\dot{E})_{a\cup b}^+$. This implies that \[s:=\span\{F(\dot{E}),(a\cup b,x_{a,a\cup b}\cap y_{b,a\cup b})\}\] is a condition in $\mathbb{E}_{F(\dot{E})}$. Since $x_{a,a\cup b}\in F^s_{a\cup b}$ and $s$ is a compatible $(\kappa,\lambda)$-system of filters, it follows that $x\in F^s_a$. Similarly, $y\in F^s_b$. This implies that $s\leq p,q$.

(\ref{item_GenericTransfer}) To see that $\bigcup\iota^{-1}(H)=\dot{E}_H$, just note that $\bigcup\iota^{-1}[H]=\bigcup\{p\in\E_{F(\dot{E})}:\iota(p)\in H\}=\bigcup\{p\in\E_{F(\dot{E})}:F^p\subseteq \dot{E}_H\}=\dot{E}_H$. \end{proof}

\subsection{Duality for derived ideal extenders}

In certain situations, the map $\iota$ from Lemma \ref{lem_IdealExtenderDerived} is a regular or even a dense embedding.  In such situations we can nicely characterize the poset $\mathbb{E}_{F(\dot{E})}$ as a subalgebra of $\mathbb{Q}$; and moreover if $\Ult(V,\dot{E})$ was forced by $\mathbb{Q}$ to be well-founded, then $F(\dot{E})$ will be a \emph{precipitous} ideal extender.  

In the following theorem, the poset which plays the role of the $\mathbb{Q}$ from Definition \ref{def_DerivedIdealExtender} and Lemma \ref{lem_IdealExtenderDerived} above is itself a quotient of a forcing associated with an ideal extender.

\begin{theorem}\label{thm_Duality_Extenders}

Suppose $F$ is a $(\kappa, \lambda)$-ideal extender as in Definition \ref{def_IdealExtender} and that $\mathbb{E}_{F}$ is its associated forcing as in Definition \ref{def_AssocForcing}.     Suppose $\mathbb{P}$ is a poset and $\tau$ is an $\mathbb{E}_F$-name such that $\mathbb{E}_F$ forces the following statement (here $\dot{G}_{F}$ is the $\mathbb{E}_{F}$-name for the $\mathbb{E}_{F}$-generic object; so $\bigcup\dot{G}_F$ is forced to be a $(\kappa, \lambda)$-extender over $V$):

\begin{itemize}
 \item  $\tau$ is $(V,\mathbb{P})$-generic, and the ultrapower map $j_{\dot{G}_{F}}$ can be lifted to domain $V[\tau]$.\footnote{We're assuming this lifting exists already in $V^{\mathbb{E}_F}$, not in some further forcing extension.} 
\end{itemize}

Then:
\begin{enumerate}
 \item\label{item_AbstractFact} The map $e: \mathbb{P} \to \ro(\mathbb{E}_{F})$ defined by $e(p)=\llbracket\check{p}\in\tau\rrbracket$ is a complete embedding (this is an abstract forcing fact).
 \item\label{item_IotaIsDense} Suppose $g$ is $(V,\mathbb{P})$-generic. In $V[g]$ let:
 \begin{itemize} 
  \item $\tilde{j}$ be the $\frac{\mathbb{E}_F}{e[g]}$-name for the lifting of $j_{{\dot{E}}^{\dot{G}}_F}$ to domain $V[g]$ (notice that $\forces_{\frac{\mathbb{E}_F}{e[g]}}\tau_{\dot{G}_F}=g$ so the lifting is forced to exist by assumption); 
  \item $\dot{E}$ be the $\frac{\mathbb{E}_F}{e[g]}$-name for the  $(\kappa, \lambda)$-extender over $V[g]$ derived from $\tilde{j}$;
  \item $F(\dot{E})$ be the ideal extender derived from the $\frac{\mathbb{E}_F}{e[g]}$-name $\dot{E}$, as in Definition \ref{def_DerivedIdealExtender}. 
  \item $\mathbb{E}_{F(\dot{E})}$ be the forcing in $V[g]$ associated with $F(\dot{E})$ as in Definition \ref{def_AssocForcing}.
 \end{itemize}  
 
 Then in $V[g]$ the map
 \begin{equation*}
\iota:  \mathbb{E}_{F(\dot{E})} \to  \ro\left(\frac{\mathbb{E}_{F}}{e[g]}\right)
 \end{equation*}
 defined as in Lemma \ref{lem_IdealExtenderDerived} is a dense embedding.
   \item\label{item_Precipitous}  If the $(\kappa,\lambda)$-ideal extender $F$ was precipitous in $V$ then $F(\dot{E})$ is precipitous in $V[g]$.
\end{enumerate}
\end{theorem}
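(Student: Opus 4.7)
My plan is the following. Part (\ref{item_AbstractFact}) is the standard abstract forcing fact: whenever $\tau$ is forced by some poset $\mathbb{Q}$ to be $(V,\mathbb{P})$-generic, the map $p \mapsto \llbracket \check{p} \in \tau \rrbracket_{\ro(\mathbb{Q})}$ is a complete embedding of $\mathbb{P}$ into $\ro(\mathbb{Q})$. Order- and incompatibility-preservation are immediate from the forcing theorem applied to ``$\check{p} \in \tau$'' and ``$\check{p},\check{q}\in\tau$''; completeness (maximal antichains going to maximal antichains) holds because for every maximal antichain $A \subseteq \mathbb{P}$, the hypothesis on $\tau$ gives $\forces_{\mathbb{Q}} \check{A} \cap \tau \ne \emptyset$, yielding $\bigvee_{p \in A} e(p) = 1_{\ro(\mathbb{Q})}$. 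Applying this with $\mathbb{Q} = \mathbb{E}_F$ proves (\ref{item_AbstractFact}).

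Part (\ref{item_IotaIsDense}) is the substantive content. Working in $V[g]$, Lemma \ref{lem_IdealExtenderDerived}(\ref{item_BooleanEmbedding}) already gives order- and incompatibility-preservation of $\iota$, so only density remains. Note that the basic conditions $q = \mathrm{span}\{F,(a,x)\}$ which are compatible with $e[g]$ form a dense subset of $\mathbb{E}_F/e[g]$. Fix such a $q$ and set $p := \mathrm{span}\{F(\dot{E}),(a,x)\} \in V[g]$; I claim $\iota(p) = q$ in $\ro(\mathbb{E}_F/e[g])$, which will simultaneously verify that $x \in F(\dot{E})_a^+$ (so that $p$ is a legitimate condition in $\mathbb{E}_{F(\dot{E})}$) and provide the density witness. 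The crucial observation is that since $\tilde{j}$ extends $j_{\dot{G}_F}$, we have $x \in \dot{E}_a \iff x \in (\dot{G}_F)_a$ for every $x \in V$, so
\[
\llbracket \check{x} \in \dot{E}_{\check{a}}\rrbracket \;=\; \llbracket \check{x} \in (\dot{G}_F)_{\check{a}}\rrbracket \quad \text{in } \ro(\mathbb{E}_F/e[g]).
\]
By separativity of $\mathbb{E}_F$ (Lemma \ref{lem_E_F_separative}), the right-hand Boolean value is precisely $q$, and since $\iota(p)$ reduces in this case to $\llbracket \check{x} \in \dot{E}_{\check{a}}\rrbracket$ (the rest of $F^p \subseteq \dot{E}$ being automatic because $\dot{E}$ is forced to extend $F(\dot{E})$), we conclude $\iota(p) = q$.

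For part (\ref{item_Precipitous}), suppose $G_0$ is $(V[g], \mathbb{E}_{F(\dot{E})})$-generic. By (\ref{item_IotaIsDense}), $\iota[G_0]$ generates an $(V[g], \mathbb{E}_F/e[g])$-generic filter; combining this with $g$ via the complete embedding $e$ yields a filter $H$ that is $(V, \mathbb{E}_F)$-generic with $\tau_H = g$. Precipitousness of $F$ in $V$ makes $M := \Ult(V, \dot{E}_F^H)$ well-founded, and the standing hypothesis of the theorem lifts $j_H: V \to M$ in $V[H]$ to a still-well-founded $\tilde{j}: V[g] \to M[\tilde{j}(g)]$. Because $\dot{E}^{G_0}$ is, by construction, the $(\kappa,\lambda)$-extender over $V[g]$ derived from $\tilde{j}$, the standard factor map
\[
k: \Ult(V[g], \dot{E}^{G_0}) \to M[\tilde{j}(g)], \qquad [a,f] \mapsto \tilde{j}(f)(a),
\]
is an elementary embedding into a well-founded model, so $\Ult(V[g], \dot{E}^{G_0})$ is itself well-founded, establishing precipitousness of $F(\dot{E})$ in $V[g]$. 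The principal obstacle I anticipate is the Boolean-value identification in step (\ref{item_IotaIsDense}), which depends on carefully unpacking the definition of $\dot{E}$ as a name for the derived extender from the lifted embedding $\tilde{j}$ of $j_{\dot{G}_F}$, and on invoking separativity of $\mathbb{E}_F$ to read the Boolean value as the original condition $q$.
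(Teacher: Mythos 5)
Your proposal is correct and follows essentially the same route as the paper: part (\ref{item_AbstractFact}) is the standard complete-embedding fact, the density witness in part (\ref{item_IotaIsDense}) is the same condition $\span\{F(\dot{E}),(a,x)\}$ (the paper settles for $\iota(r)\leq [q]_\sim$ via $F\subseteq F(\dot{E})$, while you get the slightly sharper $\iota(p)=[q]_\sim$ by noting $\dot{E}\restriction V=\bigcup\dot{G}_F$), and part (\ref{item_Precipitous}) is the same correspondence between generic ultrapowers of $V[g]$ and liftings of the well-founded generic ultrapowers of $V$, which you merely make explicit with a factor map. The only gloss is that, as the paper points out, $\frac{\mathbb{E}_F}{e[g]}$ is not separative, so your identification of $\llbracket\check{x}\in(\dot{G}_F)_{\check{a}}\rrbracket$ with ``$q$'' should really read ``with the class $[q]_\sim$ in the separative quotient''; this does not affect the argument.
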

\begin{proof}
(\ref{item_AbstractFact}) is a well-known fact.

(\ref{item_IotaIsDense})  A minor technical issue here is that although $\mathbb{E}_F$ is separative by Lemma \ref{lem_E_F_separative}---and thus $\mathbb{E}_F$ can directly be viewed as a dense subset of $\ro (\mathbb{E}_F)$---the quotient $\Q:= \frac{\mathbb{E}_F}{e[g]}$ is not separative; so $\Q$ is technically not a subset of $\ro(\Q)$.  Let $\Q/{\sim}$ denote the separative quotient of $\Q$.  Since $\Q /{\sim}$ is a dense subset of $\ro(\Q)$, it suffices to show that given any $[q]_\sim \in \Q/{\sim}$ there is an $r\in\mathbb{E}_{F(\dot{E})}$ with $\iota(r)\leq [q]_\sim$. Suppose $[q]_\sim \in\Q /{\sim}$. Then $q=F^q=\span\{F,(a,x)\}$ for some fixed $a\in[\lambda]^{<\omega}$ and some $x\in (F_a)^+$, and $q$ is compatible with every element of $e[g]$.\footnote{Recall that by Lemma \ref{lem_E_F_separative}, we know $\mathbb{E}_F$ is separative and thus can be viewed as a dense subset of $\ro(\mathbb{E}_F)$, the target of the map $e$.} Notice that $x\in(F(\dot{E})_a)^+$ since (a) $q$ is compatible with every element of $e[g]$, (b) $0\neq q\forces x\in\dot{E}$ and (c) the ideal dual to the filter $F(\dot{E})_a=\{S\subseteq\kappa:\llbracket (\check{a},\check{S})\in\dot{E}\rrbracket_{\ro(\Q)}=1\}$ is $\{S\subseteq\kappa:\llbracket(\check{a},\check{S})\in\dot{E}\rrbracket=0\}$. Thus $r:=\span\{F(\dot{E}),(a,x)\}$ is a condition in $\mathbb{E}_{F(\dot{E})}$ and by definition of $\iota$ we have $\iota(r)=\llbracket\forall b\in\supp(r)\ \check{F}^r_b\subseteq\dot{E}_b\rrbracket_{\ro(\Q)}$. We will argue that $\iota(r)\leq [q]_\sim$. Notice that since $F\subseteq F(\dot{E})$, it follows that $q=\span\{F,(a,x)\}\subseteq \span\{F(\dot{E}),(a,x)\}=r$. Note that $q$ and $r$ are conditions in different posets, so this does not imply that $r$ extends $q$. However, since for all $b\in\supp(q)$ we have $F^q_b\subseteq F^r_b$, it follows that if $H$ is $\left(V[g],\ro(\Q)\right)$-generic and $\iota(r)\in H$ then $[q]_\sim \in H$. Since $\ro(\Q)$ is separative, it follows that $\iota(r)\leq [q]_\sim$ in $\ro(\Q)$.

(\ref{item_Precipitous}) Assume the $(\kappa,\lambda)$-ideal extender $F$ is precipitous in $V$.  Let $g$ be $\big( V, \mathbb{P})$-generic.  By part \ref{item_GenericTransfer} of Lemma \ref{lem_IdealExtenderDerived}, together with part (\ref{item_IotaIsDense}) of the current theorem, generic ultrapowers of $V[g]$ by $\mathbb{E}_{F(\dot{E})}$ correspond exactly to liftings of generic ultrapowers of $V$ by $F$; since these are wellfouned (by precipitousness of $F$) the generic ultrapowers of $V[g]$ by $\mathbb{E}_{F(\dot{E})}$ are also wellfounded.
\end{proof}

\section{Proof of main theorem}\label{sectionproof}

In this section we combine the results of Section \ref{sec_Surgery} and Section \ref{sec_Duality} to finish proving Theorem \ref{maintheorem_generalized}.  Note that we only have to deal with preservation of ideally strong cardinals; preservation of generically strong cardinals was taken care of by Corollary \ref{cor_PreserveGenericStrong}.

\begin{proof}[Proof of Theorem \ref{maintheorem_generalized}]

Suppose $\mu$ is a regular cardinal, $\kappa=\mu^+$ is ideally strong on $\text{IA}_\mu$, and $2^{<\kappa}=\kappa$. Let $\theta>\kappa$ be a cardinal and let $g'$ be $(V,\Add(\kappa,\theta))$-generic. Suppose $\kappa$ is not ideally strong on $\text{IA}_\mu$ in $V[g']$. Thus, in $V[g']$, for some cardinal $\eta>\kappa$ meeting all the hypotheses of Theorem \ref{theoremsurgery}, there is no precipitous $(\kappa,\eta^+)$-ideal extender $F$ such that the associated generic ultrapower $j_G:V[g']\to M=\Ult(V[g'],\dot{E}^G_F)$ satisfies $j(\kappa)>\eta^+$, $H_{\eta^+}^V\subseteq M$, $M^{<\mu}\cap V[g']\subseteq M$,  $M\models H_\eta^{V[g']}\in\text{IA}_\mu$. Let $p_0 \in g'$ force this; that is, 
\begin{center}
\begin{tabular}{rp{3.5in}}
$p_0\forces^V_{\Add(\kappa,\theta)}$ & there is no precipitous $(\kappa,\eta^+)$-ideal extender $F$ such that the associated generic ultrapower $j_{\dot{G}_F}:{V[\dot{g}]}\to {M=\Ult(V[\dot{g}],\dot{G}_F)}$ satisfies $\crit(j_{\dot{G}_F})=\kappa$, $j_{\dot{G}_F}(\kappa)>\eta^+$, ${H}_{\eta^+}^{V[\dot{g}]}\subseteq {M}$, ${M}^{<\mu}\cap V[\dot{g}]\subseteq{M}$ and ${M}\models {H}^{V[\dot{g}]}_{\eta}\in \text{IA}_\mu$.
\end{tabular}
\end{center}
where $\dot{g}\in V$ is the canonical name for a generic filter for $\Add(\kappa,\theta)$. Using the fact that $\kappa$ is ideally strong on $\text{IA}_\mu$ in $V$, let $F\in V$ be a precipitous $(\kappa,\eta^+)$-ideal extender such that the associated generic ultrapower $j_{G_F}:V\to M=\Ult(V,G_F)$ satisfies $\crit(j_{G_F})=\kappa$, $j_{G_F}(\kappa)>\eta^+$, $H^V_{\eta^+}\subseteq M$, $M^{<\mu}\cap V\subseteq M$ and $M\models H_\eta^V\in\text{IA}_\mu$, where $G_F$ is a $(V,\mathbb{E}_F)$-generic filter. By Theorem \ref{theoremsurgery}, it follows that we can find a $(V,\Add(\kappa,\theta))$-generic filter $g$ with $p_0 \in g$ and $g\in M$, such that the embedding $j_{G_F}$ lifts to $j^*_{G_F}:V[g]\to M[j^*_{G_F}(g)]$ and satisfies $H_{\eta^+}^{V[g]}\subseteq M[j^*_{G_F}(g)]$, $M[j^*_{G_F}(g)]^{<\mu}\cap V[g]\subseteq M[j^*_{G_F}(g)]$ and $M[j^*_{G_F}(g)]\models H_\eta^{V[g]}\in \text{IA}_\mu$, where $j^*_{G_F}(g)\in V[G_F]$ is some $(M,j(\Add(\kappa,\theta))$-generic filter. Thus there is an $\E_F$-name $\tau$ with $\tau_{G_F}=g$ satisfying the hypothesis of Theorem \ref{thm_Duality_Extenders} with $\P=\Add(\kappa,\theta)^V$. Not only does the hypothesis of Theorem \ref{thm_Duality_Extenders} hold, but Theorem \ref{theoremsurgery} also shows that 
\begin{equation}\label{eqn_forced_strength}
\textrm{
\begin{tabular}{rp{3.5in}}
$\forces_{\E_F}^V$ & if the original ultrapower map $j_{\dot G_F}:V\to M=\Ult(V,\dot G_F)$ satisfies $\crit(j_{\dot{G}_F})=\kappa$, $j_{\dot{G}_F}(\kappa)>\eta^+$, $H_{\eta^+}^V\subseteq M$, $M^{<\mu}\cap V\subseteq M$, and $M\models H_\eta^{V}\in \text{IA}_\mu$, then the embedding lifted to $j^*_{\dot G_F}:V[\tau]\to M[j^*_{\dot G_F}(\tau)]$ satisfies $H_{\eta^+}^{V[\tau]}\subseteq M[j^*_{\dot{G}_F}(\tau)]$, $M[j^*_{\dot{G}_F}(\tau)]^{<\mu}\cap V[\tau]\subseteq M[j^*_{\dot{G}_F}(\tau)]$ and $M[j^*_{\dot{G}_F}(\tau)]\models H_\eta^{V[\tau]}\in \text{IA}_\mu$.\\
\end{tabular}}
\end{equation}
Applying Theorem \ref{thm_Duality_Extenders}, the embedding $e:\P\to\ro(\E_F)$ defined by $e(p)=\llbracket \check{p}\in \tau \rrbracket$ is complete. As in the conclusion of Theorem \ref{thm_Duality_Extenders}, in $V[g]$ we have the following objects: $\tilde{j}$ is the $\E_F/e[g]$-name $j^*_{\dot G_F}/e[g]$ for the lift of $j_{\dot G_F}$, $\dot{E}$ is the $\E_F/e[g]$-name for the $(\kappa,\eta^+)$-extender over $V[g]$ derived from $\tilde{j}$, $F(\dot{E})$ is the $(\kappa,\eta^+)$-ideal extender in $V[g]$ derived from $\dot{E}$, $\E_{F(\dot{E})}$ is the associated forcing and $\iota:\E_{F(\dot{E})}\to\ro(\frac{\E_F}{e[g]})$ is the natural dense embedding. Now if $H_{F(\dot{E})}$ is $(V[g],\E_{F(\dot{E})})$-generic and $H_{F}=\iota[H_{F(\dot{E})}]$ is the corresponding $(V[g],\frac{\E_F}{e[g]})$-generic filter, then by Lemma \ref{lem_IdealExtenderDerived}, $\bigcup\iota^{-1}[H_F]=\dot{E}_{H_F}$ is the $V[g]$-$(\kappa,\eta^+)$-extender derived from $\tilde{j}_{H_F}$. Since $\iota^{-1}[H_F]=H_{F(\dot{E})}$ we have $\bigcup H_{F(\dot{E})}=\dot{E}_{H_F}$ and hence the ultrapower $j_{H_{F(\dot{E})}}:V[g]\to \bar{M}=\Ult(V[g],\bigcup H_{F(\dot{E})})$ by the $V[g]$-$(\kappa,\eta^+)$-extender $\bigcup H_{F(\dot{E})}$ is well-founded. Furthermore, by (\ref{eqn_forced_strength}), and the fact that $j_{H_{F(\dot{E})}}=\tilde{j}_{H_F}=(j^*_{\dot{G}_F})_{H_F}$, it follows that $H^{V[g]}_{\eta^+}\subseteq\Ult(V[g],\bigcup H_{F(\dot{E})})$. Thus, the precipitous $(\kappa,\eta^+)$-ideal extender $F(\dot{E})\in V[g]$ has an associated generic ultrapower embedding $j_{H_{F(\dot{E})}}:V[g]\to \bar{M}$ which satisfies $\crit(j_{H_{F(\dot{E})}})=\kappa$, $j_{H_{F(\dot{E})}}(\kappa)>\eta^+$, $H_{\eta^+}^{V[g]}\subseteq \bar{M}$, $\bar{M}^{<\mu}\cap V[g]\subseteq\bar{M}$ and $\bar{M}\models H_\eta^{V[g]}\in\text{IA}_\mu$. This is a contradiction because $p_0 \in g$ forces that such an ideal extender does not exist.
\end{proof}

\section{Questions}

\begin{question}
Aside from adding Cohen subsets of $\omega_1$, what other posets will not destroy the ideal or generic strongness of $\omega_1$?  Note that by the discussion in the introduction, only forcings which don't collapse $\omega_2$ have any hope of (provably) preserving generic strongness of $\omega_1$.
\end{question}

\begin{question}
Is the ideal strongness of $\omega_1$ indestructible by $\Sacks(\omega_1,\theta)$? 

\end{question}






\begin{question}
Does the theorem still hold for $\kappa\geq\omega_2$ if we remove requirement (\ref{item_ClosedLessMu}) from Definition \ref{def_GenericStrongIA} (i.e., the requirement that ${}^{<\mu}M \subset M$)? The only place where this assumption was used was the proof of Claim \ref{claim_Mgeneric} on page \pageref{claim_Mgeneric}.
\end{question}


\end{document}